\newcommand{\Rmnum}[1]{\expandafter\@slowromancap\romannumeral #1@}
\numberwithin{equation}{section}
\newtheorem{theorem}{Theorem}[section]
\newtheorem{lemma}[theorem]{Lemma}
\newtheorem{example}[theorem]{Example}
\newtheorem{definition}[theorem]{Definition}
\newtheorem{remark}[theorem]{Remark}
\newtheorem{conjecture}{Conjecture}
\newcommand{\RNum}[1]{\uppercase\expandafter{\romannumeral #1\relax}}
\DeclareMathOperator{\disc}{disc}
\DeclareMathOperator{\Gal}{Gal}
\DeclareMathOperator{\Disc}{Disc}
\DeclareMathOperator{\Nm}{Nm}
\DeclareMathOperator{\ord}{ord}
\DeclareMathOperator{\Cl}{Cl}
\DeclareMathOperator{\Rg}{Rg}
\DeclareMathOperator{\Coker}{Coker}
\DeclareMathOperator{\Frob}{Frob}
\DeclareMathOperator{\fp}{\mathfrak{p}}
\DeclareSymbolFont{cyrletters}{OT2}{wncyr}{m}{n}
\DeclareMathSymbol{\Sha}{\mathalpha}{cyrletters}{"58}
\newcommand{\zp}{\mathbb{Z}/p\mathbb{Z}}
\newcommand{\Q}{\ensuremath{{\mathbb{Q}}}}
\begin{document}
\title{Pointwise Bound for $\ell$-torsion in Class Groups II: \\
	Nilpotent Extensions}

\author{Jiuya Wang}

\newcommand{\Addresses}{{
		\bigskip
		\footnotesize		
		Jiuya Wang, \textsc{Department of Mathematics, Duke University, 120 Science Drive 117 Physics Building Durham, NC 27708, USA
		}\par\nopagebreak
		\textit{E-mail address}: \texttt{wangjiuy@math.duke.edu}	
	}}
\maketitle	
	\begin{abstract}
	 For every finite $p$-group $G_p$ that is non-cyclic and non-quaternion and every positive integer $\ell\neq p$ that is greater than $2$, we prove the first non-trivial bound on $\ell$-torsion in class group of every $G_p$-extension. More generally, for every nilpotent group $G$ where every Sylow-$p$ subgroup $G_p\subset G$ is non-cyclic and non-quaternion, we prove a non-trivial bound on $\ell$-torsion in class group of every $G$-extension for every integer $\ell>1$. 
	\end{abstract}
	
\bf Key words. \normalfont $\ell$-torsion conjecture, nilpotent group, descendant tree of $p$-groups
\pagenumbering{arabic}	
\section{Introduction}
This is a sequel paper of the author \cite{JW20} on the following conjecture. 
\begin{conjecture}[$\ell$-torsion Conjecture]\label{conj:l-torsion}
	Given an integer $\ell>1$ and a number field $k$. For any degree $d$ extension $F/k$, the size of $\ell$-torsion in the class group of $F$ is bounded by
	$$|\Cl_{F}[\ell]| = O_{\epsilon,k}(\Disc(F)^{\epsilon}). $$
\end{conjecture}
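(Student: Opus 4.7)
My plan is to attack Conjecture \ref{conj:l-torsion} through the Ellenberg--Venkatesh framework, in which $|\Cl_F[\ell]|$ is bounded by using a large supply of prime ideals of small norm to represent every class in $\Cl_F[\ell]$ as a product of small primes, then counting such representations. To reach the full $\Disc(F)^\epsilon$ target, the pool of auxiliary primes has to be pushed significantly beyond what the geometry of numbers alone supplies.

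\textbf{Key steps.} For a degree $d$ extension $F/k$, let $M_X$ denote the number of prime ideals of $\co_F$ of norm at most $X$. The Ellenberg--Venkatesh inequality gives
$$|\Cl_F[\ell]| \;\ll_\epsilon\; \Disc(F)^{1/2+\epsilon}\, M_X^{-1+1/\ell}$$
once $X$ exceeds the Minkowski bound $\Disc(F)^{1/2(d-1)}$. The first step is therefore to lower-bound $M_X$ via an explicit Chebotarev density estimate for the Galois closure of $F/k$, coming from a prime ideal theorem together with a zero-free region for $\zeta_F$. The second step is to iterate across the subfield lattice: for a $G$-extension one decomposes $G$ along a composition series and builds up the bound on $\Cl_F[\ell]$ one abelian layer at a time, using the class field theoretic description of $\ell$-extensions at each layer as in the predecessor \cite{JW20}. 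Taken together, this converts the analytic input into a pointwise bound.

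\textbf{Main obstacle.} The decisive obstruction is the analytic input. Even with the sharpest possible use of the Ellenberg--Venkatesh inequality, the method yields at best a power saving off the Minkowski exponent $1/2$: the pool of usable primes is capped at norm $\Disc(F)^{1/2(d-1)}$, whereas reaching $\Disc(F)^\epsilon$ for fixed $\ell$ and $d$ would require split primes of norm $\Disc(F)^{\ell/(2\ell-2)}$ or smaller, well beyond that cap. The hard step, at which the plan really stops, is producing enough small split primes to break this threshold unconditionally. Overcoming it will require either genuinely new analytic input --- an unconditional effective Chebotarev well below the Minkowski bound, essentially eliminating exceptional Landau--Siegel zeros for $\zeta_F$ --- or a structural shortcut such as a reflection principle identifying $\Cl_F[\ell]$ with a class group controllable by class field theory alone. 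This is precisely the gap that separates the present paper and \cite{JW20} from the full Conjecture \ref{conj:l-torsion}.
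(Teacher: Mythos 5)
The statement you were handed is Conjecture \ref{conj:l-torsion}, not a theorem of the paper; the paper contains no proof of it, and indeed no one does. You have recognized this and, rather than fabricating an argument, have given a plan that honestly stalls at the analytic obstruction. That is the correct assessment: the conjecture is wide open, the paper's goal is only the far weaker ``non-trivial'' bound $\Disc(F)^{1/2-\delta+\epsilon}$ for certain nilpotent Galois groups, and the authors say so explicitly.

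A few small calibrations. Your statement of the Ellenberg--Venkatesh input is slightly off from the form the paper uses: Lemma \ref{lem:EV} gives $|\Cl_L[\ell]| = O\bigl(\Disc(L)^{1/2+\epsilon}/M\bigr)$ with $M = \pi(\Disc(L/K)^{\theta};L/K,e)$ and the constraint $0<\theta<\tfrac{1}{2\ell(d-1)}$; the exponent on $M$ is $-1$, not $-1+1/\ell$, and the cap on $\theta$ --- not the Minkowski bound per se --- is what forces one to count primes in a range where unconditional Chebotarev is hopeless. Your diagnosis of the bottleneck (no effective Chebotarev below this range without GRH, and the spectre of Landau--Siegel zeros) is accurate and is exactly the reason the paper retreats to Maynard/Zaman-type lower bounds for $\pi(x;L/k,\mathcal{C})$ valid only for $x \ge \Disc(L/k)^\beta$, trading optimality for unconditionality. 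Your ``iterate across the subfield lattice'' step also matches in spirit the paper's Extension Lemma \ref{lem:ind-hyp-2}, though the paper needs the extra structural notion of a forcing extension to make the induction go through for non-abelian $p$-groups. None of this, of course, constitutes a proof of the conjecture, and you are right to say it does not.
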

This conjecture has been brought forward previously by \cite{BruSil,Duk98,Zha05}. We refer the audience to the first paper \cite{JW20} for an introduction of its relations to many other questions in arithmetic statistics (including Malle's conjecture, Cohen-Lenstra heuristics, integral points and Selmer groups of curves).

By a theorem of Brauer-Siegel, see for example \cite{SL}, the class number of $F$ with $[F:\mathbb{Q}]=d$ is bounded by $O_{\epsilon,d}(\Disc(F)^{1/2+\epsilon})$. This gives the so-called \emph{trivial bound} for $\ell$-torsion in class groups:
\begin{equation}\label{eqn:trivial-bound}
|\Cl_F[\ell]|  = O_{\epsilon}(\Disc(F)^{1/2+\epsilon}).
\end{equation}
Although Conjecture \ref{conj:l-torsion} proposes a bound as small as $O_{\epsilon, k}(\Disc(F)^{\epsilon})$, in terms of what can be really proved, it is still wildly open to break $1/2$ into $1/2-\delta$ where $\delta>0$ is an arbitrarily small positive number. We will call such a bound a \emph{non-trivial bound} for $\ell$-torsion in class groups. 

We now give a brief summary on progress towards Conjecture \ref{conj:l-torsion}. Firstly, we mention all cases where Conjecture \ref{conj:l-torsion} is currently known, that is, $\ell$-torsion in class groups of all $\ell$-extensions over an arbitrary number field $k$, see e.g. \cite[section 2]{KluWan} for a compact treatment. This includes, for example, Gauss's classical results on $2$-torsion for quadratic extensions, which is usually considered the only case where Conjecture \ref{conj:l-torsion} is achieved. These results all essentially come from a direct use of genus theory, and was mentioned in previous literatures on isolated small degree cases. 

Other than special cases from genus theory, results on this question are basically categorized into two directions: conditional result assuming GRH and unconditional result. The most broad conclusion is due to Ellenberg-Venkatesh \cite{EV07} where a non-trivial bound in the order of $O_{\epsilon, k}( \Disc(F)^{1/2- 1/2\ell(d-1)+\epsilon})$ is proved upon assuming GRH for  Artin L-functions. On the other hand, for the unconditional result, we know much less. When $\ell = 2$, \cite{BSTTTZ} gives  a non-trivial bound for $2$-torsion in class groups for all extensions by using geometry of numbers. In the same work of Ellenberg-Venkatesh \cite{EV07}, using reflection principle, an unconditional result for $\ell = 3$ for all small degree extensions with $d\le 4$ is given. Earlier results on $\ell=3$ for quadratic extensions can also be found in \cite{Pie05,HV06}. For every $\ell>3$, in the author's previous paper \cite{JW20}, we show a non-trivial bound for $\ell$-torsion in class groups of number fields where Galois group is $G= (\zp)^r$ with $r>1$. 

We mention that there are also recent results, see e.g. \cite{Ellen16,ML17,Widmer1,FreWid18,Chen,FreWid18x,ZTArtin} on removing the GRH condition in \cite{EV07} and get a non-trivial bound on $\ell$-torsion in class groups \emph{on average}. In contrast to results on average, this paper, together with the first paper \cite{JW20} in this sequence, focuses on proving results for \textit{every} extension while removing the GRH assumption. Notice that for most cases treated in this paper (aside from $\ell=2$ and the special cases $\ell=p$), even an average result hasn't been worked out before. 

Comparing to the previous paper \cite{JW20} where we focus on very restricted Galois groups, in this paper we enlarge the set of Galois groups where a non-trivial point-wise bound holds unconditionally for arbitrary $\ell>1$ to a much more general family of groups. For an arbitrary integer $\ell>1$, we denote $\mathcal{G}_k(\ell)$ to be the set of permutation Galois groups $G$ where there exists $\delta_k(G, \ell)>0$ such that $|\Cl_F[\ell]| =  O_{\epsilon, G,k}( \Disc(F)^{1/2 - \delta(G, \ell)+\epsilon})$ for every $G$-extension $F/k$. We write $\mathcal{G}(\ell)$ and $\delta(G,\ell)$ in short when $k = \Q$. In this language, aside from special cases that can be handled by genus theory, we know that: by \cite{BSTTTZ}, the group $G\in \mathcal{G}(2)$ for every transitive permutation group $G\subset S_n$; by \cite{EV07}, the group $G\in \mathcal{G}(3)$ if $G\subset S_n$ is a transitive permutation group with degree $n\le 4$; by \cite{JW20}, for general $\ell$, the group $A\in \mathcal{G}(\ell)$ for all elementary abelian groups $A = (\zp)^r$ with $r>1$. Our main theorem is to greatly enlarge the set $\mathcal{G}_k(\ell)$ for every $\ell$.

Our main theorem is as follows. A group $G$ is \emph{non-quaternion} if $G$ is not a generalized quaternion group, see \ref{sssec:quaternion} for more details on generalized quaternion groups. 
\begin{theorem}\label{thm:main}
	The regular representation of every non-cyclic and non-quaternion $p$-group $G_p$  is in $\mathcal{G}(\ell)$ for every integer $\ell>1$. More generally, the regular representation for every nilpotent group $G$ is in $\mathcal{G}(\ell)$ for every integer $\ell>1$ if its Sylow-$p$ subgroup $G_p$ is in $\mathcal{G}(\ell)$ for every $p||G|$. 
\end{theorem}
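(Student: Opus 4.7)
My plan decomposes the theorem into two independent claims: (A) every non-cyclic, non-quaternion $p$-group $G_p$ lies in $\mathcal{G}(\ell)$, and (B) a nilpotent group whose Sylow subgroups all lie in $\mathcal{G}(\ell)$ itself lies in $\mathcal{G}(\ell)$. Part (A) I would prove by induction on $|G_p|$ along the descendant tree of $p$-groups; part (B) would reduce to (A) through the Sylow decomposition of a $G$-extension.

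For part (A), the base case $|G_p| = p^2$ forces $G_p = (\Z/p\Z)^2$, which is covered by the main theorem of \cite{JW20}. For the inductive step with $|G_p| > p^2$, I would first establish the purely group-theoretic lemma: any non-cyclic, non-quaternion $p$-group admits a central subgroup $N$ of order $p$ such that $G_p/N$ is again non-cyclic and non-quaternion. This is exactly where the non-quaternion hypothesis intervenes: a generalized quaternion group has a unique minimal normal subgroup (its center of order $2$), modding out by which destroys non-cyclicity; for all other non-cyclic $p$-groups, one can choose $N$ by appealing to the structure of $\Omega_1(Z(G_p))$ and the elementary fact that $G_p/Z(G_p)$ is non-cyclic whenever $G_p$ is non-abelian, treating separately the abelian case where the socle has rank $\geq 2$ and the non-abelian case where any central $Z$ of order $p$ yields a non-cyclic quotient and one then verifies non-quaternion by a small case analysis.

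Given such $N$, let $L = F^N$, so that $k \subset L \subset F$ is a tower with $\Gal(L/k) = G_p/N$ and $[F:L] = p$. By induction $|\Cl_L[\ell]| \ll \Disc(L)^{1/2-\delta'+\epsilon}$. To propagate this to $F$ I would run an Ellenberg--Venkatesh style counting argument: bound the number of $\ell$-torsion classes in $\Cl_F$ by counting small-norm integral ideals of $F$ representing distinct classes, and manufacture such ideals by lifting a supply of small-norm auxiliary primes from $L$ (whose existence ultimately traces back to genus theory on the $(\Z/p\Z)^r$ quotient inside $\Gal(L/k)$). The conductor-discriminant comparison $\Disc(L) \leq \Disc(F)^{1/p}$ converts the savings on $L$ into savings on $F$ and produces an effective $\delta(G_p,\ell) > 0$.

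For part (B), a $G$-extension $F/k$ of a nilpotent $G = \prod_p G_p$ is the compositum of the pairwise linearly disjoint subfields $F_p = F^{\prod_{q \neq p} G_q}$, and each $|\Cl_{F_p}[\ell]| \ll \Disc(F_p)^{1/2-\delta_p+\epsilon}$ by hypothesis. I would combine these by controlling the kernel and cokernel of the natural map $\bigoplus_p \Cl_{F_p}[\ell] \to \Cl_F[\ell]$ via a Chebotarev-style construction of auxiliary primes with prescribed splitting across the different $F_p$, together with the identity $\Disc(F) = \prod_p \Disc(F_p)^{[F:F_p]}$ (up to a negligible contribution at primes wildly ramified in several $F_p$ simultaneously).

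The main obstacle I expect is the tower propagation inside part (A). Amplifying a bound on $L$ to one on $F$ across a single degree-$p$ step incurs a loss in the savings parameter $\delta$, and iterating up the descendant tree must not drive $\delta(G_p,\ell)$ to zero; securing a uniform lower bound depending only on $G_p$ and $\ell$ requires carefully tracking the Ellenberg--Venkatesh count, the discriminant ratio, and the size of the auxiliary-prime supply at each inductive step, and choosing the central quotient $N$ so as to maximize the surviving savings.
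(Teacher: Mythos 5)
Your overall shape is right --- induction on $p$-groups via central degree-$p$ steps rooted at $(\Z/p\Z)^r$, then a compositum argument for nilpotent groups --- and your group-theoretic lemma ("find a central $N$ of order $p$ with $G_p/N$ non-cyclic and non-quaternion") is morally the content of Theorem~\ref{thm:p-grp-forcing}. But the proposal has two substantive gaps, both in the analytic machinery.

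The decisive step, propagating a bound on $\Cl_L[\ell]$ to a bound on $\Cl_F[\ell]$ across a single degree-$p$ extension $F/L$, is glossed over. You write that one would ``manufacture small-norm ideals by lifting auxiliary primes from $L$, whose existence ultimately traces back to genus theory on the $(\Z/p\Z)^r$ quotient.'' This does not work, and it is not what the paper does. Genus theory produces unramified abelian extensions, not a supply of primes of $L$ that split completely in $F$; and nothing in your argument explains why a prime of $L$ would split in $F$ at all. The paper supplies exactly this mechanism with the notion of a \emph{forcing extension}: one needs a conjugacy class $\mathcal{C}$ in $G_p/N$ such that every lift of $c \in \mathcal{C}$ to $G_p$ has the same order as $c$, so that any unramified prime of $k$ with Frobenius in $\mathcal{C}$ has all its factors in $L$ splitting in $F/L$. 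For central extensions with kernel of order $p$ this reduces, via Hall's theorem, to $G_p$ being non-cyclic (odd $p$) or non-cyclic and non-quaternion ($p=2$) --- which is what forces the hypothesis in the theorem. Once you have the forcing class, the unconditional supply of such primes comes not from genus theory but from the Maynard--Zaman effective Chebotarev lower bound (Lemma~\ref{lem:MZ}), which is applicable only when $\Disc(L/k)$ is small relative to $\Disc(F/k)$; when it is large one instead uses the trivial bound through $\Cl_{L/k}[\ell]\cdot\Cl_{F/L}[\ell]$. This dichotomy (the ``big $\eta$ vs.~small $\eta$'' split in Lemma~\ref{lem:ind-hyp-2}) is what makes the argument quantitative and unconditional, and it is absent from your sketch. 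Relatedly, your stated worry that iterating degree-$p$ steps might drive $\delta$ to zero is a non-issue: $G_p$ is fixed, so there are finitely many steps and the savings simply decay geometrically but stay positive; the genuinely hard point is having any split primes at all without GRH.

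For part (B), the proposed control of the kernel and cokernel of $\bigoplus_p \Cl_{F_p}[\ell] \to \Cl_F[\ell]$ is a much harder problem than what is needed, and there is no reason to expect the cokernel of that map to be small --- it can absorb essentially all the ``new'' $\ell$-torsion. The paper's Compositum Lemma (Lemma~\ref{lem:compositum}) sidesteps this entirely: for each factor $F_p$ one has $|\Cl_L[\ell]| \le |\Cl_{F_p/k}[\ell]|\cdot|\Cl_{L/F_p}[\ell]|$, uses the inductive saving on the first factor and the trivial bound on the second, and then balances over $p$ using the conductor--discriminant inequality $\prod_p \Disc(F_p/k)^{[L:F_p]} \ge \Disc(L/k)$, which forces at least one $F_p$ to carry a substantial fraction of the discriminant. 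You should replace your proposed kernel/cokernel argument with this balancing argument.
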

\begin{remark}
   An analogue of Theorem \ref{thm:main} also holds over arbitrary base field $k$. All results are effective. 
\end{remark}
For example, Theorem \ref{thm:main} proves that every non-cyclic abelian $p$-group $A_p$ is in $\mathcal{G}(\ell)$. This largely generalizes the previous result of the author, which we can rephrase as follows:
\begin{theorem}[\cite{JW20}, Theorem $1.1$]\label{thm:base-case}
    Every non-cyclic elementary abelian group $A$ is in $\mathcal{G}(\ell)$. 
\end{theorem}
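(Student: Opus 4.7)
The plan is to combine the small-prime method of Ellenberg--Venkatesh with unconditional effective counting of primes in arithmetic progressions for abelian extensions, exploiting the subfield structure of elementary abelian extensions through the conductor-discriminant formula. The non-cyclicity hypothesis $r \ge 2$ enters essentially to create a gap between $\log \Cond(F)$ and $\log \Disc(F)$.

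By a lemma of Ellenberg--Venkatesh (\cite[\S3]{EV07}), to prove $|\Cl_F[\ell]| \ll \Disc(F)^{1/2 - \delta + \epsilon}$ for some $\delta > 0$, it suffices to exhibit at least $c \log \Disc(F)$ distinct degree-one prime ideals of $F$ of norm at most $\Disc(F)^{\eta}$, for fixed $c, \eta = \eta(A, \ell) > 0$ we are free to choose. Since $A = (\Z/p\Z)^r$ is abelian, class field theory embeds $F \subset \Q(\zeta_N)$ where $N = \Cond(F)$, and a rational prime $q \nmid N$ contributes such a degree-one prime iff $q$ splits completely in $F$, equivalently iff $q$ lies in a fixed index-$|A|$ subgroup of $(\Z/N\Z)^*$. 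The task reduces to an unconditional count of primes in a prescribed union of arithmetic progressions modulo $N$ up to $X = \Disc(F)^{\eta}$.

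The non-cyclicity hypothesis enters through the conductor-discriminant formula $\Disc(F) = \prod_{\chi \in \widehat A \setminus \{1\}} \Cond(\chi)$. At each tamely ramified prime $q$ with inertia $I_q \cong \Z/p\Z$ in $A$, exactly $p^{r-1}(p-1)$ non-trivial characters of $A$ are non-trivial on $I_q$ (hence ramified at $q$), each with local conductor $q$; thus $q$ contributes $p^{r-1}(p-1)\log q$ to $\log \Disc(F)$ but only $\log q$ to $\log N$. A parallel computation at wildly ramified primes above $p$ preserves the same proportionality up to bounded error, yielding
$$\log N \le \alpha \log \Disc(F) + O(1), \qquad \alpha := \frac{1}{p^{r-1}(p-1)} < 1,$$
precisely because $r \ge 2$. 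An unconditional Linnik-type estimate---based on the classical zero-free region for Dirichlet $L$-functions together with the Deuring--Heilbronn phenomenon to handle a possible Siegel zero---yields at least $c' X/(|A|\log X)$ primes $q \le X$ of $\Q$ splitting completely in $F$, provided $X \ge N^B$ for an absolute constant $B$. Choosing $\eta$ just above $\alpha B$ makes $X = \Disc(F)^{\eta} \ge N^B$, delivering $\gg \Disc(F)^{\eta}/\log \Disc(F)$ split primes, comfortably more than the $\log \Disc(F)$ required in the first step; feeding these back into the Ellenberg--Venkatesh bound produces a power saving $\delta = \delta(A, \ell) > 0$.

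The main obstacle is the unconditional prime count: managing a potential Siegel zero at a real character of conductor dividing $N$ via Deuring--Heilbronn is standard but inherits Siegel's ineffective constant, so the final $\delta$ is effective while the implicit constant may be ineffective unless one invokes alternative arguments (such as dyadic bilinear methods or Heath--Brown-type estimates). Secondary care is needed at the wildly ramified primes above $p$, where the local conductor computation underlying $\alpha < 1$ is more delicate; this affects numerical constants but not the strict inequality $\alpha < 1$.
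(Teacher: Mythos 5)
Your reduction to a prime-counting problem is the right instinct, and the conductor--discriminant comparison you set up is correct: at a tamely ramified prime with cyclic inertia $\Z/p\Z$, exactly $p^{r-1}(p-1)$ characters are ramified, so one indeed gets $\log N \le \alpha \log\Disc(F) + O(1)$ with $\alpha = 1/(p^{r-1}(p-1))$. But there is a genuine gap: you never reconcile the Ellenberg--Venkatesh constraint on $\theta$ with the Linnik-type threshold. Lemma~\ref{lem:EV} forces the counting range to be $\Disc(L/K)^\theta$ with $\theta < \frac{1}{2\ell(d-1)}$; it is not a free parameter. When you ``choose $\eta$ just above $\alpha B$,'' you need simultaneously $\eta < \frac{1}{2\ell(d-1)}$, i.e.\ $B\alpha < \frac{1}{2\ell(d-1)}$, i.e.\ $B < \frac{p^{r-1}(p-1)}{2\ell(p^r-1)} < \frac{1}{2\ell}$. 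Every unconditional Linnik/Brun--Titchmarsh threshold has $B$ well above $1$ (e.g.\ the paper invokes Lemma~\ref{lem:MZ} with $\beta = 35$ from Zaman's thesis), so these requirements are irreconcilable for any $\ell > 1$. A further red flag: the only role $r\ge 2$ plays in your argument is to make $\alpha < 1$, yet $\alpha = 1/(p-1) < 1$ already holds for $r=1$ and $p \ge 3$. If the argument closed as written it would give an unconditional non-trivial pointwise bound for $\ell$-torsion in cyclic degree-$p$ fields, which is well beyond current methods; so the non-cyclicity hypothesis is not entering in the way the theorem actually requires.

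What [JW20] does differently, and what the present paper's own Extension Lemma~\ref{lem:ind-hyp-2} abstracts, is a dichotomy that your proposal is missing. The key input (quoted in the introduction of this paper) is the representation-theoretic factorization $|\Cl_{L/k}[\ell]| = \prod_i |\Cl_{K_i/k}[\ell]|$ and $\Disc(L/k) = \prod_i \Disc(K_i/k)$ over the degree-$p$ subfields $K_i$, which is special to elementary abelian groups and uses $r \ge 2$ (for $r=1$ there is no such decomposition). One then splits on the ratio $\eta = \log\Disc(K_i/k)/\log\Disc(L/k)$ for a suitable subfield: when $\eta$ is small, one runs Ellenberg--Venkatesh on $L/K_i$ and the Chebotarev count happens modulo the tiny discriminant $\Disc(K_i/k)$ rather than $\Disc(L/k)$, so the Linnik threshold is cheap; when $\eta$ is large, one instead exploits the multiplicative decomposition and conductor--discriminant formula to absorb the discrepancy. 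It is precisely this cutoff argument (mirrored in Cases~1 and~2 of the proof of Lemma~\ref{lem:ind-hyp-2}) that makes the numerics close; a single regime cannot. Finally, a smaller point: your route via Siegel zeros and Deuring--Heilbronn would, as you note, introduce an ineffective constant, whereas the result cited ([JW20], built on the effective lower bound of Maynard/Zaman reproduced here as Lemma~\ref{lem:MZ}) is fully effective, as the paper's Remark after Theorem~\ref{thm:main} emphasizes.
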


The proof of Theorem \ref{thm:base-case} heavily relies on a result from representation theory, i.e., for every non-cyclic elementary abelian group $A$, we have for every $A$-extension $L/k$, 
$$|\Cl_{L/k}[\ell]| = \prod_{K_i/k} |\Cl_{K_i/k}[\ell]|,\quad\quad \Disc(L/k)= \prod_{K_i/k} \Disc(K_i/k),$$
where $K_i/k$ ranges over all degree $p$ subfields of $L$, see \cite[section $3$]{JW20} for more details. However such a nice structure does not hold for general finite groups $G$. In particular, one can show that among all abelian groups, elementary abelian groups are the only groups carrying such a nice structure. Although such a rigid group structure is the key reason in obtaining good upper bounds on $\ell$-torsion in \cite{JW20} for some cases, it largely limits the Galois groups where the method applies. As a contrast, in this paper, we will develop a much softer way, so that we can prove results for a much broader set of Galois groups. 

The main strategy of this work is two-sided. 
\begin{itemize}
	\item 
	Firstly, we introduce a new type of group extension, we call it \emph{forcing extension}, see Definition \ref{def:forcing}. We develop an Extension Lemma \ref{lem:ind-hyp-2}, which is an induction on Lemma \ref{lem:EV} in \cite{EV07}, specially for forcing extensions. This enables us to deduce the $\ell$-torsion bound for a large degree number field from the $\ell$-torsion bound for a small degree number field. More precisely, if $G\in \mathcal{G}_k(\ell)$, then for a forcing extension $\pi$, 
		\begin{center}
			\begin{tikzcd}
				0\arrow{r} & H\arrow[r] & \tilde{G} \arrow[r,"\pi"] & G \arrow{r} & 0,
			\end{tikzcd}
		\end{center}	
	we prove that $\tilde{G}$ is also in $\mathcal{G}_k(\ell)$. 
	
	\item
	Secondly, we prove that every non-cyclic and non-quaternion $p$-group can be constructed via iterated forcing extensions from its Frattini quotient. This enables us to apply the Extension Lemma proved before to all such $p$-groups, with the initial cases $G_p = (\zp)^r$ ($r>1$) in Theorem \ref{thm:base-case} proved in \cite{JW20}. This requires a careful analysis on the composition series of a fixed finite $p$-group. 
\end{itemize}

The organization of the paper is following. In section \ref{sec:induction}, we prove two induction lemmas: in section \ref{ssec:induction-extension}, we give the Extension Lemma for an inductive use of \cite{EV07} on forcing group extensions; in section \ref{ssec:induction-compositum}, we give the Compositum Lemma for an inductive use of \cite{EV07} on compositum of extensions. In section \ref{sec:group-theory}, we focuses on studying $p$-groups, and we prove that every non-cyclic and non-quaternion $p$-group can be constructed via iterated forcing group extensions. Finally in section \ref{sec:main-proof}, we give the proof for the main theorem.

\section{Notations}
\noindent
$k$: a number field considered as the base field\\
$\tilde{F}/k$: Galois closure of $F$ over $k$\\
$\Gal(F/k)$: Galois group of $F/k$ as a permutation group\\
$Z(G)$: center of a finite group $G$\\
$G^{ab}$: abelianization of a finite group $G$\\
$\Disc(F/k)$: absolute norm of relative discriminant $\Nm_{k/\Q}(\disc(F/k))$ of $F/k$ where $\disc(F/k)$ is the relative discriminant ideal in $k$, when $k=\mathbb{Q}$ it is the usual absolute discriminant\\
$\Cl_{F/k}$: relative class group of $F/k$, when $k=\mathbb{Q}$ it is the usual class group of $F$\\
$\Cl_{F/k}[\ell]$:  $\{ [\alpha]\in \Cl_{F/k}\mid \ell [\alpha] = 0\in \Cl_{F/k} \}$\\
$|\Cl_{F/k}[\ell]|$, $|\Cl_{F}[\ell]|$: the size of $\Cl_{F/k}[\ell]$, $\Cl_{F}[\ell]$\\
$\pi(Y; L/k, \mathcal{C})$: for a Galois extension $L/k$, the number of unramified prime ideals $p$ in $k$ with $|p|<Y$ and $\Frob_{p} \in \mathcal{C}$ where $\mathcal{C}$ is a conjugacy class of $\Gal(L/k)$ \\
$\Delta(\ell,d)$: a constant slightly smaller than $\frac{1}{2\ell(d-1)}$, for details see Lemma \ref{lem:EV} \\

 
\section{Induction over Ellenberg-Venkatesh}\label{sec:induction}
In this section, we are going to prove two versions of inductive methods to apply the following critical lemmas by Ellenberg-Venkatesh \cite{EV07}.
\begin{lemma}[\cite{EV07}]\label{lem:EV}
	Given a Galois extension $L/K$ and $0< \theta< \frac{1}{2\ell(d-1)}$ and an integer $\ell>1$, denote $$M:=\pi(\Disc(L/K)^{\theta};L/K,e),$$ then 
	\begin{equation}\label{eqn:EV-lemma}
	|\Cl_{L}[\ell]|= O_{\epsilon,[K:\Q],\ell}\Big(\frac{\Disc(L)^{1/2+\epsilon}}{M}\Big).
	\end{equation}
\end{lemma}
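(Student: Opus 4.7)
The plan is to follow the argument of Ellenberg--Venkatesh in \cite{EV07}, adapted to the relative setting $L/K$. Write $N=|\Cl_L[\ell]|$, $d=[L:K]$, $n=[L:\Q]$, and $X=\Disc(L/K)^{\theta}$. The $M$ unramified primes of $K$ of norm at most $X$ that split completely in $L$ produce a pool $\mathcal{P}$ of $Md$ prime ideals of $L$, each of norm at most $X$.

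First, I would apply pigeonhole to the natural map $\phi:\mathcal{P}\to\Cl_L/\ell\Cl_L$. Since $\Cl_L$ is finite abelian, $|\Cl_L/\ell\Cl_L|=N$, so some class has at least $Md/N$ preimages; fix one of them, $\mathfrak{p}_0$. For every other preimage $\mathfrak{p}$ the fractional ideal $\mathfrak{p}\mathfrak{p}_0^{-1}$ is an $\ell$-th power in $\Cl_L$, so one can write
$$\mathfrak{p}\mathfrak{p}_0^{-1}=\mathfrak{b}_{\mathfrak{p}}^{\ell}\cdot(\gamma_{\mathfrak{p}})$$
for some integral ideal $\mathfrak{b}_{\mathfrak{p}}$ and $\gamma_{\mathfrak{p}}\in L^{\times}$. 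Using Minkowski's bound I would choose $\mathfrak{b}_{\mathfrak{p}}$ within its ideal class to have norm $\ll\Disc(L)^{1/2}$, and use the unit group to rebalance $\gamma_{\mathfrak{p}}$ so that its archimedean coordinates are all comparable to $|N(\gamma_{\mathfrak{p}})|^{1/n}$, up to a regulator-controlled factor.

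Second, I would count admissible $\gamma_{\mathfrak{p}}$ via geometry of numbers. The quantity $|N(\gamma_{\mathfrak{p}})|=N(\mathfrak{p})/(N(\mathfrak{p}_0)\,N(\mathfrak{b}_{\mathfrak{p}})^{\ell})$ is controlled by a power of $X$, and after the unit rebalancing the possible $\gamma_{\mathfrak{p}}$ lie in a Minkowski box whose $\co_L$-lattice-point count has the shape $\Disc(L)^{1/2+\epsilon}\cdot X^{\ell(d-1)+\epsilon}$. The hypothesis $\theta<1/(2\ell(d-1))$ is precisely what allows the $X^{\ell(d-1)}$ factor to be absorbed into $\Disc(L)^{\epsilon}$. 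Since the pairs $(\mathfrak{p},\gamma_{\mathfrak{p}})$ are distinct in $\mathfrak{p}$, the pigeonhole lower bound of $Md/N$ combined with the geometry-of-numbers upper bound yields $Md/N\ll\Disc(L)^{1/2+\epsilon}$, equivalently $N\ll\Disc(L)^{1/2+\epsilon}/M$.

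The main technical obstacle I anticipate is the interplay between the unit/regulator adjustment and the lattice-point count: the archimedean balancing of $\gamma_{\mathfrak{p}}$ introduces a regulator-weighted constant that must be shown negligible compared with $\Disc(L)^{\epsilon}$, and the Minkowski box volume must be estimated sharply enough to produce the critical threshold $1/(2\ell(d-1))$. Ensuring that all implicit constants depend only on $[K:\Q]$, $\ell$, and $\epsilon$ --- and not on $L$ itself --- is where the real care is required, and is why the proof in \cite{EV07} relies on classical effective bounds on regulators and unit-group geometry rather than finer analytic inputs.
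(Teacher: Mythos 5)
The paper does not prove Lemma~\ref{lem:EV}; it is quoted directly from Ellenberg--Venkatesh~\cite{EV07} and used as a black box, so there is no in-paper argument to compare against. Your sketch does capture the basic shape of the Ellenberg--Venkatesh argument (pigeonhole a large set of small split primes into $\Cl_L/\ell\Cl_L$, extract $\gamma_{\mathfrak p}$ with $\mathfrak p\mathfrak p_0^{-1}=\mathfrak b_{\mathfrak p}^\ell(\gamma_{\mathfrak p})$, normalize with units, and count via geometry of numbers), and your identification of the regulator/unit balancing as the technical crux is correct.

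However, the quantitative bookkeeping in the second step is not internally consistent. The hypothesis $\theta<\frac{1}{2\ell(d-1)}$ gives $X^{\ell(d-1)}=\Disc(L/K)^{\theta\ell(d-1)}<\Disc(L/K)^{1/2}$, which is a factor of size roughly $\Disc(L)^{1/2}$, \emph{not} $\Disc(L)^{\epsilon}$. If the lattice-point count really were $\Disc(L)^{1/2+\epsilon}\cdot X^{\ell(d-1)+\epsilon}$ as you state, the threshold would only yield a final bound of order $\Disc(L)^{1+\epsilon}$, which is weaker than the trivial Brauer--Siegel bound $\Disc(L)^{1/2+\epsilon}$; so one of the two claims (the shape of the count, or ``absorbed into $\Disc(L)^\epsilon$'') must be adjusted. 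Relatedly, the norm $|N(\gamma_{\mathfrak p})|=N\mathfrak p/(N\mathfrak p_0 N\mathfrak b_{\mathfrak p}^\ell)$ is not ``controlled by a power of $X$'' alone, since $N\mathfrak b_{\mathfrak p}^\ell$ can be as large as $\Disc(L)^{\ell/2}$; and because $\mathfrak p$ is recovered from the \emph{pair} $(\mathfrak b_{\mathfrak p},\gamma_{\mathfrak p})$ rather than from $\gamma_{\mathfrak p}$ alone, an injectivity/counting argument must control both coordinates simultaneously --- naively, the number of $\mathfrak b$ of norm $\le\Disc(L)^{1/2}$ is already $\Disc(L)^{1/2+\epsilon}$, so you would be double-counting. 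Filling in how the pair $(\mathfrak b_{\mathfrak p},\gamma_{\mathfrak p})$ is pinned down without this loss is exactly where the real content of the Ellenberg--Venkatesh lemma lies, and your sketch does not yet resolve it.
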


As one can observe, a critical input in applying Lemma \ref{lem:EV} is a good estimate for $M$. It would be fantastic if we have a good lower bound on the value of $M$ in terms of $\Disc(L)$. However, the exact challenge comes from the condition we impose on $\theta$, that is, $\theta$ need to be really small. In particular, the bound $\Disc(L/K)^{\theta}$ by which we count prime ideals is so small that no current versions of effective Chebotarev density theorem can guarantee a single prime that is split in $L/K$ without assuming GRH. If we are allowed to use GRH, then the effective Chebotarev density theorem proven by \cite{LO75} immediately give a good lower bound on $M$.
\begin{theorem}[\cite{LO75}, Effective Chebotarev Density Theorem on GRH]\label{thm:GRH}
Given a Galois extension $L/K$ with Galois group $G$. Assuming GRH, then for every $x\ge 2$, we have
	$$\big|   \pi(x; L/K, e) - \frac{1}{|G|} \textnormal{Li}(x) \big| = O_{[L:\Q]}(x^{1/2}\ln (\Disc(L) x) ).$$
\end{theorem}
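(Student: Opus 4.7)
The theorem is the Chebotarev density theorem of Lagarias--Odlyzko, so the plan is to follow the standard analytic route through Artin L-functions. First I would reduce the conjugacy-class count to character sums: writing $G=\Gal(L/K)$ and decomposing the indicator of $\{e\}$ via character orthogonality as $\mathbf{1}_{\{e\}}=|G|^{-1}\sum_{\chi}\chi(1)\chi$, the associated Chebyshev-weighted function
$$\psi(x):=\sum_{N\mathfrak{p}^{m}\le x}\mathbf{1}_{\{e\}}(\Frob_{\mathfrak{p}}^{m})\log N\mathfrak{p}$$
becomes a linear combination of the character sums $\psi_{\chi}(x):=\sum_{N\mathfrak{p}^{m}\le x}\chi(\Frob_{\mathfrak{p}}^{m})\log N\mathfrak{p}$. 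Each $\psi_{\chi}$ is exactly the object picked up by the logarithmic derivative $-L'/L(s,\chi,L/K)$ through the Dirichlet-series expansion at unramified primes (with ramified primes contributing a negligible $O(\log\Disc(L))$).

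Next I would apply the explicit formula to each $\psi_{\chi}(x)$. Perron's formula with a smoothed cut-off, combined with a contour shift of $-L'/L(s,\chi)$ past the critical strip, gives
$$\psi_{\chi}(x)=\delta_{\chi=\mathbf{1}}\,x-\sum_{\rho}\frac{x^{\rho}}{\rho}+E_{\chi}(x),$$
where $\rho$ ranges over non-trivial zeros of $L(s,\chi,L/K)$ and $E_{\chi}(x)=O([L:\Q]\log(\Disc(L)x))$ absorbs the trivial zeros, gamma-factor residues, and smoothing error. GRH forces $\Re(\rho)=1/2$ for every non-trivial zero, so $|x^{\rho}|=x^{1/2}$. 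A Hadamard-factorization density estimate bounds the number of zeros with $|\Im\rho-T|\le 1$ by $O(\chi(1)\log(\Disc(L)(|T|+2)))$, so truncating the zero sum at a height $T$ chosen to balance tail and error yields $\sum_{\rho}x^{\rho}/\rho=O([L:\Q]\,x^{1/2}\log(\Disc(L)x))$. Summing over the at most $[L:\Q]$ irreducible characters, then converting $\psi$ to $\pi$ and $x$ to $\textnormal{Li}(x)$ by partial summation (with the $m\ge 2$ prime-power contribution absorbed into the error term), gives the claim.

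The main obstacle is quantitative bookkeeping so that $\Disc(L)$ appears only inside the logarithm while the implicit constant depends only on $[L:\Q]$. This requires invoking the conductor--discriminant formula so that the individual Artin conductors $\Cond(\chi)$ are controlled uniformly by $\Disc(L)$, and also handling the gamma-factor contribution near the central line uniformly in $\chi$ (so that the explicit-formula error term grows only logarithmically in $\Disc(L)$). A secondary subtlety, if one wanted to reduce to GRH only for Hecke L-functions, would be Brauer induction writing each $L(s,\chi,L/K)$ as a ratio of Hecke L-functions; since the statement postulates GRH directly for the Artin L-functions, this last step is bypassed, and the zero-counting and conductor-bookkeeping remain the technical heart of the argument.
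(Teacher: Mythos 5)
The paper does not prove this theorem; it is an imported result, cited to Lagarias--Odlyzko \cite{LO75} and used as a black box (and in the end the paper actually relies on the unconditional Lemma \ref{lem:MZ} from \cite{May,ZamThesis} rather than on this GRH statement). So there is no ``paper's own proof'' to compare against. Judged on its own terms, your sketch is a correct high-level outline of the Lagarias--Odlyzko argument: orthogonality of characters to reduce to $\psi_\chi$, the explicit formula for $-L'/L(s,\chi)$, GRH to place zeros on $\Re(s)=1/2$, a zero-counting estimate with conductor bookkeeping via the conductor--discriminant formula, and partial summation to pass from $\psi$ to $\pi$ and from $x$ to $\mathrm{Li}(x)$. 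You also correctly flag the two real subtleties: that \cite{LO75} actually phrases things via Brauer induction so as to only need GRH for Hecke $L$-functions, and that the uniform control of Artin conductors and gamma factors is where the work is. One small imprecision worth noting: after the orthogonality step the relevant weighted sum is $|G|^{-1}\sum_\chi \chi(1)\psi_\chi$, and the zero-count for $L(s,\chi)$ near height $T$ is $O(\log A(\chi) + \chi(1)\log(|T|+2))$ with $A(\chi)$ the analytic conductor; when you combine this with the weights $\chi(1)/|G|$ and use $\sum_\chi \chi(1)\log\mathrm{Cond}(\chi)\ll\log\Disc(L)$ (conductor--discriminant) and $\sum_\chi\chi(1)^2=|G|$, you get the stated $O_{[L:\Q]}(x^{1/2}\log(\Disc(L)x))$; ``summing over the at most $[L:\Q]$ irreducible characters'' without these weights would not by itself give the discriminant dependence in the right place.
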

As a corollary, assuming GRH, we can take $\theta = \Delta(\ell, d)$ in Lemma \ref{lem:EV}, i.e., arbitrarily close to $\frac{1}{2\ell(d-1)}$, and then get $M = \Disc(L/K)^{\Delta(\ell,d)}$. 

In fact, if we do not assume GRH, \cite{LO75} also proves an unconditional result which requires the bound $x \ge \exp(10[L:\Q] (\ln \Disc(L))^2)$ to be at least sub-exponential in $\Disc(L)$. It is of course too far away from the allowable range in Lemma \ref{lem:EV}. 

In this paper we will show how to apply Lemma \ref{lem:EV} to get a pointwise saving by unconditional knowledge on distribution of prime ideals. Actually it suffices if we can count prime ideals where the range $x$ is a polynomial in $\Disc(L)$. We will apply the following statement in our proof since the format of the statement is convenient for us to give a uniform treatment for a large family of groups all at once.

 \begin{lemma}[\cite{May, ZamThesis}]\label{lem:MZ}
 	Given $L/k$ a Galois extension of number fields with $[L:\Q]=d$. There exists absolute, effective constants $\gamma = \gamma(k, G)>2$, $\beta = \beta(k, G)>2$, $D_0 = D_0(k)>0$ and $C=C(k)>0$ such that if $\Disc(L/k)\ge D_0$, then for $x\ge \Disc(L/k)^{\beta}$, we have
 	$$\pi(x;L/k, \mathcal{C})  \ge C_k\frac{1}{\Disc(L/k)^{\gamma}}\cdot \frac{|\mathcal{C}|}{|G|} \cdot \frac{x}{\ln x} .$$
 \end{lemma}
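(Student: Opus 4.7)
The plan is to derive Lemma~\ref{lem:MZ} through the standard log-free zero density plus Deuring--Heilbronn route for Chebotarev counts, which underlies the references \cite{May, ZamThesis}. The overall shape mirrors Lagarias--Odlyzko's effective Chebotarev, but without invoking GRH; the price paid shows up as the factor $\Disc(L/k)^{-\gamma}$, which encodes the effect of a possible Siegel zero.

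First, I would trade conjugacy classes for characters: by orthogonality,
\[
\pi(x; L/k, \mathcal{C}) = \frac{|\mathcal{C}|}{|G|}\sum_{\chi} \overline{\chi(g)} \, \pi_{\chi}(x),
\]
where $g\in \mathcal{C}$ and $\pi_{\chi}(x)$ is the natural character-twisted prime counting function. To avoid invoking Artin holomorphy, I would apply Brauer's induction theorem to express each $L(s,\chi)$ as an integer combination of Hecke $L$-functions induced from cyclic subextensions, whose analytic continuation, functional equation, and conductor/discriminant bookkeeping $\prod \Cond(\chi) \ll \Disc(L/k)^{O(1)}$ are all unconditional.

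Next I would write down the truncated explicit formula in the form
\[
\psi(x; L/k, \mathcal{C}) = \frac{|\mathcal{C}|}{|G|}\Big(x - \sum_{\rho}\frac{x^{\rho}}{\rho}\Big) + E(x, L),
\]
where $\rho$ runs over non-trivial zeros of the Hecke $L$-functions obtained in the previous step and $E(x,L)$ is a mild error term. The main term is $(|\mathcal{C}|/|G|)\, x$. A log-free zero density estimate of the shape
\[
\#\{\rho : \Re\rho \ge \sigma,\, |\Im\rho|\le T\} \ll (\Disc(L/k)\, T)^{A(1-\sigma)}
\]
(Fogels, Weiss, Kowalski--Michel in the relevant generality) shows that, apart from at most one exceptional real zero, the total contribution of the zero sum is strictly subordinate to the main term once $x$ exceeds a fixed polynomial in $\Disc(L/k)$; this is where the threshold $x \ge \Disc(L/k)^{\beta}$ is calibrated.

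The hard part will be handling a possible Siegel-type real zero $\beta_0$ of some real character $\chi_0$ sitting very close to $s=1$. By a Stark/Pintz/Hoffstein--Ramakrishnan style lower bound for $L(1,\chi_0)$, one has $1-\beta_0 \gg \Disc(L/k)^{-\gamma}$ for some effectively computable $\gamma$; feeding $\beta_0 \le 1 - c\, \Disc(L/k)^{-\gamma}$ into the explicit formula, the exceptional term is bounded by $x \cdot x^{-c\,\Disc(L/k)^{-\gamma}/\log x}$, and the Deuring--Heilbronn repulsion pushes every other zero far enough from $\Re s = 1$ to make the rest of the zero sum negligible once $x \ge \Disc(L/k)^{\beta}$ with $\beta$ sufficiently large relative to $\gamma$. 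Passing from $\psi$ to $\pi$ by partial summation then yields
\[
\pi(x; L/k, \mathcal{C}) \ge C_k\, \Disc(L/k)^{-\gamma} \cdot \frac{|\mathcal{C}|}{|G|} \cdot \frac{x}{\ln x},
\]
as required. The main obstacle is therefore this Siegel-zero analysis: securing an effective, explicit exponent $\gamma$ for both the repulsion and the $L(1,\chi_0)$ lower bound, uniformly across all Galois $L/k$ without appealing to unproven conjectures.
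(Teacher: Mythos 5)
The paper does not prove Lemma~\ref{lem:MZ}; it is imported wholesale from Maynard and Zaman, with only a remark following it about where the values of $\beta$ and $\gamma$ are worked out. So there is no in-paper proof to compare against, and your task was essentially to reconstruct the argument from the cited literature. Your overall framework --- orthogonality to reduce to characters, Brauer induction to reach Hecke $L$-functions, the explicit formula, a log-free zero density estimate, and Deuring--Heilbronn repulsion around a possible exceptional zero --- is the right skeleton and matches what Zaman's thesis and Maynard's paper do.

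There is, however, a genuine error in your handling of the exceptional zero, and it is precisely the step where the $\Disc(L/k)^{-\gamma}$ loss in the statement originates. You write that after feeding in $\beta_0 \le 1 - c\,\Disc(L/k)^{-\gamma}$ the exceptional term $x^{\beta_0}$ becomes ``negligible once $x \ge \Disc(L/k)^{\beta}$ with $\beta$ sufficiently large.'' That cannot happen in a range polynomial in $\Disc(L/k)$: one has $x^{\beta_0} = x\,e^{-(1-\beta_0)\log x}$, and for $x = \Disc(L/k)^{\beta}$ the exponent $(1-\beta_0)\log x$ is of order $\Disc(L/k)^{-\gamma}\log\Disc(L/k)$, which tends to $0$; making $x^{\beta_0}$ even a bounded fraction smaller than $x$ would require $\log x \gg \Disc(L/k)^{\gamma}$, i.e.\ $x$ exponential in the discriminant. (Also, your written form $x\cdot x^{-c\,\Disc(L/k)^{-\gamma}/\log x}$ has a spurious $/\log x$ in the exponent.) The actual mechanism is the opposite of ``the exceptional term is negligible'': one accepts that $x^{\beta_0}$ is nearly equal to $x$, and instead lower-bounds the \emph{difference} by linearizing, $x - x^{\beta_0} = x\bigl(1 - e^{-(1-\beta_0)\log x}\bigr) \gg x\,(1-\beta_0)\log x$ whenever $(1-\beta_0)\log x \le 1$. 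Inserting the Stark-type lower bound $(1-\beta_0) \gg \Disc(L/k)^{-\gamma}$ then produces a weakened main term of size $\gg \Disc(L/k)^{-\gamma}\,x\,\log x$ for $\psi$, hence $\gg \Disc(L/k)^{-\gamma}\,x/\log x$ for $\pi$ after partial summation; the Deuring--Heilbronn repulsion and the log-free zero density estimate are then used to show the remaining zero sum is strictly smaller than this already-damped main term once $x \ge \Disc(L/k)^{\beta}$. Without this linearization step, the argument as you have written it would force an exponential threshold in $x$ and would not give the lemma.
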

\begin{remark}
	The actual values for $\beta$ and $\gamma$ are determined in \cite{May} when $k=\Q$ and $L/k$ is abelian. The actual values for general cases are also determined in \cite{ZamThesis}. We leave them as a symbol since these numbers could potentially be improved in the future. In all cases, the value for $\beta$ is much larger than $\gamma$. So we will always assume $\beta> \gamma+1/2$ in this paper. The reason we make this assumption is to simplify the numerical analysis in the proof of Extension Lemma \ref{lem:ind-hyp-2}. 
\end{remark}
 We mention that results in this direction have also appeared previously in \cite{Weiss, Deb,ZTleast,ZTBrunT,MontVau}. We expect that other versions of statements in this type (including both upper and lower bounds) can also be applied to some groups in our argument, and will result in different amount of power savings in the final answer. For example, in the author's previous paper \cite{JW20}, an upper bound result in \cite{MontVau} seems to give optimal savings among all results in this direction. However, since we do not aim to optimize the savings in this work, we will simply apply Lemma \ref{lem:MZ} by which we can get a uniform proof.

\subsection{Induction by Group Extension}\label{ssec:induction-extension}
In this section, our main goal is to prove Extension Lemma \ref{lem:ind-hyp-2}. We first define a new type of group extension \textit{forcing extensions}.
\begin{definition}[Forcing Extension]\label{def:forcing}
	We say that a group extension $(\tilde{G}, \pi)$ of $G$
		\begin{center}
			\begin{tikzcd}
				0\arrow{r} & H\arrow[r] & \tilde{G} \arrow[r,"\pi"] & G \arrow{r} & 0,	
			\end{tikzcd}
		\end{center}
		is \emph{forcing} if there exists a conjugacy class $\mathcal{C}\subset G$ such that for every element $c\in \mathcal{C}$, all elements in $\pi^{-1}(c)\subset \tilde{G}$ has the same order with $c\in G$. We will also say that $(\tilde{G}, \pi)$ is \emph{forcing with respect to $\mathcal{C}$}.		
\end{definition}

\begin{remark}
This notion of forcing extension does not fit with other common seen concepts of group extensions. Split extensions are not necessarily forcing, and central extensions are not necessarily forcing. For example, the cyclic group $C_6$, as a group extension of $C_3$ by $C_2$, is both split and central, but it is not forcing. However, we will see that this notion of forcing extension is particularly amenable to discussions on $p$-groups. 
\end{remark}

\begin{lemma}[Extension Lemma]\label{lem:ind-hyp-2}
	Given two finite groups $G$ and $H$ with $|G|=n$ and $|H|=m$ and an arbitrary integer $\ell>1$. Suppose the regular representation of $G$ is in $\mathcal{G}_k(\ell)$ with respect to $\delta = \delta_k(G,\ell)$. If the extension $(\tilde{G}, \pi)$ of $G$
	\begin{center}
		\begin{tikzcd}
			0\arrow{r} & H\arrow[r] & \tilde{G} \arrow[r,"\pi"] & G \arrow{r} & 0.	
		\end{tikzcd}
	\end{center}
	is a forcing extension with respect to $\mathcal{C}$, then the regular representation $\tilde{G}$ is in $\mathcal{G}_k(\ell)$ with respect to $$\delta_k(\tilde{G},\ell) = \delta_k(G,\ell) \cdot \eta_0$$
	where $\eta_0=\frac{\Delta(\ell, m)}{m\cdot \Delta(\ell,m) + r\cdot\max\{ \beta, \gamma \}}$ and $r = \ord(g)$ for $g\in \mathcal{C}$.
\end{lemma}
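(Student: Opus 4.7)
Given a $\tilde{G}$-extension $\tilde{L}/k$, let $F = \tilde{L}^H$ be the corresponding $G$-subextension. Write $D = \Disc(\tilde{L}/k)$ and $D_F = \Disc(F/k)$, related by the tower formula $D = \Disc(\tilde{L}/F) \cdot D_F^m$. The plan is to apply Lemma \ref{lem:EV} to the relative extension $\tilde{L}/F$ with parameter $\theta$ just below $\Delta(\ell, m)$, yielding
$$|\Cl_{\tilde{L}}[\ell]| = O\bigl(D^{1/2+\epsilon}/M\bigr), \quad M = \pi\bigl(\Disc(\tilde{L}/F)^\theta;\, \tilde{L}/F,\, e\bigr),$$
and to lower-bound the split-prime count $M$ using the forcing property together with Lemma \ref{lem:MZ}.

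The critical observation enabled by forcing is that the ``good'' primes can be detected via their Frobenius class in $G$ alone. For any unramified prime $\mathfrak{p}$ of $k$ whose Frobenius image in $G$ lies in $\mathcal{C}$, the forcing hypothesis forces the order of $\Frob_\mathfrak{p}$ in $\tilde{G}$ to equal $r$. Hence $\mathfrak{p}$ decomposes in $F$ into $n/r$ primes $\mathfrak{P}$ of residue degree $r$, and each splits completely in $\tilde{L}/F$ because $\Frob_\mathfrak{p}^r = e$ in $\tilde{G}$. This allows counting such primes by applying Lemma \ref{lem:MZ} to the smaller Galois extension $F/k$ rather than $\tilde{L}/k$, which yields the strictly better Chebotarev range $D_F^\beta$. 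The result is
$$M \ge (n/r) \cdot \pi\bigl(\Disc(\tilde{L}/F)^{\theta/r};\, F/k,\, \mathcal{C}\bigr),$$
valid whenever $\Disc(\tilde{L}/F)^{\theta/r} \ge D_F^\beta$, which via $\Disc(\tilde{L}/F) = D/D_F^m$ simplifies to $D_F \le D^{\theta/(r\beta + m\theta)}$.

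The proof then proceeds by case analysis on $D_F$. In Case 1, when $D_F \le D^{\theta/(r\beta+m\theta)}$, the Chebotarev estimate feeds into Lemma \ref{lem:EV} directly to give $|\Cl_{\tilde{L}}[\ell]| = O(D^{1/2-\theta/r+\epsilon} \cdot D_F^{\gamma+m\theta/r})$, which is extremized at the boundary. In Case 2, when $D_F > D^{\theta/(r\beta+m\theta)}$, the Chebotarev range condition fails and the inductive hypothesis enters: from $|\Cl_F[\ell]| = O(D_F^{1/2 - \delta + \epsilon})$, combined with the exact-sequence inequality $|\Cl_{\tilde{L}}[\ell]| \le |\Cl_{\tilde{L}/F}[\ell]| \cdot |\Cl_F[\ell]|$ coming from the norm map $\Cl_{\tilde{L}} \to \Cl_F$, together with a relative Brauer-Siegel type bound on $|\Cl_{\tilde{L}/F}[\ell]|$, one extracts the needed saving, which improves precisely because $\Disc(\tilde{L}/F) = D/D_F^m$ is small in this regime. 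Taking $\theta \to \Delta(\ell, m)$ and the minimum of the two savings, and using $\max\{\beta, \gamma\} = \beta$ under the standing assumption $\beta > \gamma + 1/2$, delivers the stated saving $\delta_k(G, \ell) \cdot \eta_0$. The main obstacle will be Case 2: controlling $|\Cl_{\tilde{L}/F}[\ell]|$ cleanly (perhaps via a secondary application of Lemma \ref{lem:EV} tuned to the small relative discriminant) and balancing the numerical constants so that both cases meet at exactly the stated form of $\eta_0$.
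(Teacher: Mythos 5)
Your proposal takes essentially the same route as the paper: decompose $\tilde L/k$ through $F=\tilde L^H$, use the forcing property to show that primes of $k$ with Frobenius in $\mathcal{C}$ split completely in $\tilde L/F$, feed this into Lemma~\ref{lem:EV} via Lemma~\ref{lem:MZ} applied to the small extension $F/k$, and dichotomize on the size of $\Disc(F/k)$ (the paper's $\eta$). The only spot where you hesitate---controlling $|\Cl_{\tilde L/F}[\ell]|$ in your Case~2---is in fact the easy case: the paper uses nothing more than the Brauer--Siegel trivial bound (your ``relative Brauer--Siegel type bound,'' made precise as the last chain of inequalities in Lemma~\ref{lem:trivial-bound-relative}), not a second application of Lemma~\ref{lem:EV}, and the saving there comes purely from the inductive hypothesis applied at the large scale $\Disc(F/k)^{\delta}$.
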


We first give a lemma on the size of $\ell$-torsion in relative class groups for a general $\ell>1$. 
\begin{lemma}\label{lem:trivial-bound-relative}
	Given a relative extension $L/F/k$ and an arbitrary integer $\ell>1$, we have
	$$\frac{|\Cl_{L/k}[\ell]|}{|\Cl_{F/k}[\ell]|} \le  |\Cl_{L/F}[\ell]| \le |\Cl_{L/F}| \le [L:F] \cdot \frac{|\Cl_L|}{|\Cl_F|}\le [L:F] \cdot \frac{\Disc(L)^{1/2}}{\Disc(F)^{1/2}}.$$
\end{lemma}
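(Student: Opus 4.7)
\emph{First inequality.} The approach is to handle the chain from left to right, isolating the analytic content in the last step. For the first inequality, I would exploit the transitivity $\Nm_{L/k}=\Nm_{F/k}\circ\Nm_{L/F}$. If $[\mathfrak{a}]\in\Cl_{L/k}=\Ker\Nm_{L/k}$, then $\Nm_{L/F}([\mathfrak{a}])$ automatically lands in $\Ker\Nm_{F/k}=\Cl_{F/k}$; conversely, any class killed by $\Nm_{L/F}$ is killed by $\Nm_{L/k}$, so the kernel of $\Nm_{L/F}|_{\Cl_{L/k}}$ is all of $\Cl_{L/F}$. This gives a left-exact sequence
\[ 0\to \Cl_{L/F}\to \Cl_{L/k}\xrightarrow{\Nm_{L/F}}\Cl_{F/k},\]
and since passage to $\ell$-torsion is left exact on finite abelian groups, the first inequality follows at once.

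\emph{Middle inequalities.} The second inequality is immediate since $\Cl_{L/F}[\ell]\subseteq\Cl_{L/F}$. For the third, I would use the short exact sequence $0\to \Cl_{L/F}\to \Cl_L\xrightarrow{\Nm_{L/F}}\Nm_{L/F}(\Cl_L)\to 0$ to rewrite $|\Cl_{L/F}|=|\Cl_L|/|\Nm_{L/F}(\Cl_L)|$, and then reduce matters to bounding $[\Cl_F:\Nm_{L/F}(\Cl_L)]\le [L:F]$. This bound is a standard consequence of class field theory when $L/F$ is Galois (the situation in our applications, since $L$ and $F$ both sit inside a common Galois extension of $k$): the Artin reciprocity map identifies $\Cl_F/\Nm_{L/F}(\Cl_L)$ with $\Gal((L\cap H_F)/F)$, whose order divides $[L:F]$. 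As a sanity check one can also note that the composition $\Cl_F\to\Cl_L\to\Cl_F$ of extension followed by norm is multiplication by $[L:F]$, which already places $[L:F]\cdot\Cl_F$ inside $\Nm_{L/F}(\Cl_L)$.

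\emph{Final inequality and main obstacle.} For the last inequality I would invoke the analytic class number formula
\[ h_K R_K \;=\; \frac{|\mu_K|\,\sqrt{|\Disc(K)|}}{2^{r_1(K)}(2\pi)^{r_2(K)}}\cdot \Res_{s=1}\zeta_K \]
for $K=L$ and $K=F$ and form the ratio. Units of $F$ remain units of $L$, so $R_L\gg R_F$ with constants depending only on $[L:\Q]$; classical bounds of Landau and Siegel bracket $\Res_{s=1}\zeta_K$ between polylogarithmic factors in $\Disc(K)$. Combining these gives $|\Cl_L|/|\Cl_F|\ll \sqrt{\Disc(L)/\Disc(F)}$, with the implicit constants absorbed into the error. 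This last step is the real obstacle: the trivial Minkowski bound alone yields only $|\Cl_L|/|\Cl_F|\le C\sqrt{\Disc(L)}$, which is too weak, so one genuinely needs the regulator comparison coming from the class number formula to gain the factor $\sqrt{\Disc(F)}$ in the denominator. The first three inequalities, by contrast, are elementary or classical once the Galois hypothesis on $L/F$ guaranteed by the applications is in place.
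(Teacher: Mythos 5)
Your treatment of the first three inequalities is correct and essentially matches the paper's. For the first inequality, you use left-exactness of $0\to\Cl_{L/F}\to\Cl_{L/k}\xrightarrow{\Nm_{L/F}}\Cl_{F/k}$ together with left-exactness of the $\ell$-torsion functor; the paper instead constructs a short exact sequence ending in $\Cl_{F/k}\cap\Nm_{L/F}(\Cl_L)$ and applies right-exactness of $\otimes\,\Z/\ell\Z$. These are the same content. For the third inequality, your identification $\Cl_F/\Nm_{L/F}(\Cl_L)\cong\Gal\big((H_F\cap L)/F\big)$ is exactly what the paper uses (with $M=h_F\cap L$); note that this identification does not require $L/F$ Galois, so the hedge you add there is unnecessary.

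The genuine gap is in your fourth step. You assert "Units of $F$ remain units of $L$, so $R_L\gg R_F$ with constants depending only on $[L:\Q]$." This does not follow from the inclusion $\mathcal{O}_F^\times\hookrightarrow\mathcal{O}_L^\times$: in general $\mathcal{O}_F^\times$ has strictly smaller rank than $\mathcal{O}_L^\times$, so $R_F$ and $R_L$ are covolumes of lattices in spaces of different dimensions and admit no naive comparison. The statement you need is a substantial theorem of Friedman and Skoruppa \cite{FrSko}, which gives an effective absolute lower bound on the relative regulator $R_L w_F/(R_F w_L)$ in terms of the degree alone; this is precisely what the paper cites. As stated, your proof has a hole at the very step you correctly identify as the crux.

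A secondary point: you propose "bracketing $\Res_{s=1}\zeta_K$ between polylogarithmic factors in $\Disc(K)$" using Landau and Siegel. The Siegel-type lower bound on the residue is ineffective, which would make the resulting constants in the lemma ineffective, contradicting the paper's stated effectivity. No lower bound on an individual residue is actually required: after invoking Friedman–Skoruppa, what remains in the class number formula ratio is the single quantity $\Res_{s=1}\zeta_L/\Res_{s=1}\zeta_F$, and one only needs an effective \emph{upper} bound on it; this is what the Brauer--Siegel-type estimates in \cite{Lou} supply.
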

\begin{proof}
	By the definition of relative class group, there exists a subgroup $N = \Nm_{L/F}(\Cl_L)\subset \Cl_F$ such that 
	\begin{center}\label{diag:relative-class-grp}
		\begin{tikzcd}	
			0\arrow{r} & \Cl_{L/F} \arrow{r}\arrow[d] & \Cl_{L} \arrow[r,"\Nm_{L/F}"]\arrow[d,"\Nm_{L/k}"] & N \arrow{r}\arrow[d] & 0 \\		
			0\arrow{r} & 0 \arrow{r} & \Cl_k \arrow{r} & \Cl_k \arrow{r} & 0 \\
		\end{tikzcd}
	\end{center}
	Taking the kernel of the two short exact sequences, we get
	\begin{center}
		\begin{tikzcd}	
			0\arrow{r} & \Cl_{L/F} \arrow{r} & \Cl_{L/k} \arrow{r}& \Cl_{F/k}\cap \Nm_{L/F}(\Cl_L) \arrow{r}& 0.
		\end{tikzcd}
	\end{center}
	Since tensor product with $\mathbb{Z}/\ell\mathbb{Z}$ is right exact, we have for arbitrary integer $\ell$ that
	\begin{center}
		\begin{tikzcd}
			\Cl_{L/F}[\ell]\arrow[r] & \Cl_{L/k}[\ell] \arrow[r,"\Nm"] & (\Cl_{F/k}\cap N) [\ell] \arrow{r} & 0.	
		\end{tikzcd}
	\end{center}
	This proves the first inequality. The second inequality is trivial. The third inequality comes from the fact that $\Coker(\Nm) = \Cl_F/N = \Gal(M/F)$ where $M = h_F\cap L$ is the maximal abelian unramified extension of $F$ inside $L$. The last inequality comes from a combination of an absolute lower bound  $\frac{\Rg_L}{\Rg_k} \ge O_{[L:\Q]}(1)$ by \cite{FrSko} and the theorem of Brauer-Siegel, see for example in \cite{Lou}.
\end{proof}

We are now ready to give the proof of the Extension Lemma \ref{lem:ind-hyp-2}.
\begin{proof}[Proof of Lemma \ref{lem:ind-hyp-2}]
	Every $\tilde{G}$-extension $L/k$ is realized as an $H$-extension $L/K$ over a $G$-extension $F/k$. Given $\mathcal{C}\subset G$, we denote $r = r(\mathcal{C})$ to be the order of elements $c\in \mathcal{C}$. Firstly, we show that if a prime $p$ in $k$ is unramified in $L/k$ and  $\Frob_p(F/k)\in \mathcal{C}\subset G$, then every prime $\fp|p$ above $p$ in $F$ will split in $L/F$. We fix a prime $\mathfrak{P}|p$ in $L/k$ and $\fp = \mathfrak{P}\cap O_F$. Suppose the decomposition group is $D_{\mathfrak{P}/p}= \langle g \rangle \subset \tilde{G}$, then $D_{\fp/p} = \langle g \rangle H/H = \langle \pi(g)\rangle \subset G$ and $D_{\mathfrak{P}/\fp}=\langle g \rangle \cap H\subset H$. Notice that
	\begin{equation}
	D_{\mathfrak{P}/\fp} = e \iff \langle g \rangle \cap H = e \iff g^r=e.
	\end{equation}
	It follows from the assumption on $\mathcal{C}$ that $D_{\mathfrak{P}/\fp} = e$. Therefore every $\fp$ above $p$ will split in $L/F$.
	
	Next, we separate the discussion into two cases based on how large
	$$\eta(L/k):= \frac{\ln \Disc(F/k)}{\ln \Disc(L/k)},$$
	is, i.e., whether $\eta(L/k)\le \eta_0$ or $\eta(L/k)\ge\eta_0$ where 
	$$\eta_0:=\frac{\Delta(\ell, m)}{m\cdot \Delta(\ell,m) + r\cdot\max\{ \beta, \gamma \}}, $$ 
	is the cut-off, and $\beta=\beta(G,k)$, $\gamma=\gamma(G,k)$ and $D_0= D_0(k)$ are parameters in Lemma \ref{lem:MZ}. For the rest of the proof, we will write $\eta$ for $\eta(L/k)$ in short.
	
	\textbf{Case $1$ (Big $\eta$): } If $\eta(L/k)\ge \eta_0$, then we always have
		\begin{equation}\label{eqn:extension-bigF}
		\begin{aligned}
		|\Cl_{L/k}[\ell]|  \le & |\Cl_{F/k}[\ell]| \cdot |\Cl_{L/F}[\ell]| = O_{\epsilon,k}(\Disc(F/k)^{1/2-\delta+\epsilon}) \cdot \frac{\Disc(L)^{1/2+\epsilon}}{\Disc(F)^{1/2+\epsilon}}. \\
		= & O_{\epsilon,k}\Big(\frac{\Disc(L/k)^{1/2+\epsilon}}{ \Disc(F/k)^{\delta}}\Big) = O_{\epsilon,k}( \Disc(L/k)^{1/2-\delta_b(\eta, \ell)+\epsilon}),
		\end{aligned}
		\end{equation}
		where $\delta_b(\eta,\ell) = \delta\cdot \eta$. Here the first inequality follows from Lemma \ref{lem:trivial-bound-relative}, the first equality follows from Lemma \ref{lem:trivial-bound-relative} and the assumption on that $G$ is in $\mathcal{G}(\ell)$ with respect to $\delta = \delta_k(G, \ell)$. The second equality comes from conductor-discriminant formula for relative extensions (notice that we have suppressed the dependence on $k$). The last equality follows from definition of $\eta$. We remark here that we actually do not use the assumption $\eta\ge \eta_0$ here. This bound holds universally true no matter how large $\eta$ is, but it will behave better when $\eta$ is relative large, and when $\eta\ge \eta_0$, we get a uniform saving $\delta_b(\eta, \ell) \ge \delta\cdot \eta_0$ that is independent of $L/k$. So we will need treat the case when $\eta$ is small in another way. 
		
	    \textbf{Case $2$ (Small $\eta$): } If $\eta(L/k)\le \eta_0$, then we separate the discussion when $\Disc(F/k) \le D_0$ and $\Disc(F/k) \ge D_0$ where $D_0 = D_0(k)$ in Lemma \ref{lem:MZ}. 
	    
	    If $\Disc(F/k) \le D_0$, we denote $x = \Disc(L/F)^{\Delta(\ell, m)/r}$. By the standard effective Chebotarev density theorem \cite{LO75}, when $x\ge C_0(k, n)= \exp(10[F:\Q] (\ln D_0 \Disc(k)^{[F:k]})) \ge \exp(10[L:\Q] (\ln D_F)) $, we have
	    \begin{equation}\label{eqn:Chebo-D0}
	    \pi(x;F/k, \mathcal{C}) \ge \frac{1}{2}\frac{|\mathcal{C}|}{|G|} \cdot \frac{x}{\ln x}.
	    \end{equation}
	    Therefore when $\Disc(L/k) \ge C_0(k,n) D_0^m$ is sufficiently large comparing to $k$, we have $\Disc(L/F) = \Disc(L/k) \Disc(F/k)^{-m} \ge \Disc(L/k) D_0^{-m} \ge C_0(k,n)$, thus (\ref{eqn:Chebo-D0}) holds. 
	    
	    If $\Disc(F/k)\ge D_0$, then we apply Lemma \ref{lem:MZ} to $F/k$ with $x = \Disc(L/F)^{\Delta(\ell, m)/r}$, and we obtain
	    \begin{equation}\label{eqn:Chebo-inc}
	   \pi(x;F/k,\mathcal{C}) \ge C_k \frac{1}{\Disc(F/k)^{\gamma}} \cdot \frac{|\mathcal{C}|}{|G|} \cdot \frac{x}{\ln x},
	    \end{equation}
	    when $x\ge \Disc(F/k)^{\beta}$. By the definition of $\eta$, we have
	    $$\Disc(L/F)^{\Delta(\ell, m)/r} \ge \Disc(F/k)^{\beta} \iff \eta \le \eta_0,$$
	    so (\ref{eqn:Chebo-inc}) always holds as long as $\Disc(F/k) \ge D_0$ and $\eta \le \eta_0$. Denote $C'_k= \min\{ 1/2, C_k \}$, then for every $L/k$ with $\eta\le \eta_0$ and $\Disc(L/k)$ sufficiently large, we have
	    	$$\pi(x;F/k,\mathcal{C}) \ge C'_k \frac{1}{\Disc(F/k)^{\gamma}} \cdot \frac{|\mathcal{C}|}{|G|} \cdot \frac{x}{\ln x},$$
	    	which is a lower bound on the number of prime ideals $p$ in $k$ such that $p$ become unramified with $\Frob_p(F/k) \in \mathcal{C} \subset G$. By the argument at the beginning of this proof, all primes $\fp|p$ above $p$ in $F$ will split in $L/F$. Since the inertia degree at $p$ for $F/k$ is $r$, we have $\Nm_{F/\Q}(\fp) = \Nm_{k/\Q}(p)^r$. Therefore
	    	$$\pi(\Disc(L/F)^{\Delta(\ell,m)}; L/F, e) \ge \pi(x;F/k, \mathcal{C}) \ge C'_k \frac{|\mathcal{C}|}{|G|} \cdot \frac{1}{\Disc(F/k)^{\gamma}}  \cdot\frac{\Disc(L/F)^{\Delta(\ell,m)/r } }{\ln \Disc(L/F)^{\Delta(\ell,m)/r }}.$$
	    	Therefore by Lemma \ref{eqn:EV-lemma} we have
	    	\begin{equation}\label{eqn:extension-smallF}
	    	\begin{aligned}
	    	|\Cl_{L/k}[\ell]| = &  O_{\epsilon,k,[F:\Q]}\Big(\frac{\Disc(L)^{1/2+\epsilon}}{ \Disc(L/F)^{\Delta(\ell,m)/r}\cdot \Disc(F/k)^{-\gamma}}\Big)= O_{\epsilon,k}(\Disc(L/k)^{1/2 -\delta_{s}(\eta, \ell) +\epsilon}),\\
	    	\end{aligned}
	    	\end{equation}
	    	where $$\delta_{s}(\eta,\ell) = (1-m\eta)\cdot\Delta(\ell,m)/r- \eta\cdot \gamma.$$
	    	The last equality in (\ref{eqn:extension-smallF}) comes from the definition of $\eta$ and that $\Disc(L/F) = \Disc(L/k)^{1-m\eta}$.
	    
	    Finally, after the discussion for two ranges of $\eta$, we notice that $\delta_{s}(\eta, \ell)$ decreases as $\eta$ increases, and $\delta_b(\eta,\ell)$ increases as $\eta$ increases. It suffices to compare their value at $\eta = \eta_0$:
	    	\begin{equation}
	    	\begin{aligned}
	    	\delta_s(\eta, \ell)= (1-m\eta_0)\cdot\frac{\Delta(\ell,m)}{r}- \eta\cdot \gamma &\ge \delta\cdot \eta_0 = \delta_b(\eta, \ell) \iff\\
	    	\max\{ \beta, \gamma \} - \gamma &\ge \delta. \\
	    	\end{aligned}
	    	\end{equation}
	    	Assuming $\beta>\gamma+1/2$, we will always have $\max\{\beta,\gamma\} - \gamma \ge \delta$. Therefore we can take 
	    	$$\delta_k(\tilde{G},\ell) = \delta_b(\eta_0, \ell) = \delta \cdot \eta_0.$$
\end{proof}

\subsection{Induction by Compositum}\label{ssec:induction-compositum}
In this section, we will prove a lemma on applying the method of Ellenberg-Venkatesh to compositum of number fields. 
\begin{lemma}[Compositum Lemma]\label{lem:compositum}
	Given two permutation groups $G_1\subset S_n$ and $G_2\subset S_m$ and any integer $\ell>1$. Suppose $G_1$ and $G_2$ are both in $\mathcal{G}(\ell)$ with respect to $\delta_i= \delta_k(G_i, \ell)$. Denote $G=G_1\times G_2\subset S_{mn}$ to be a direct product of $G_1$ and $G_2$ as permutation groups, we have $ G \in \mathcal{G}(\ell)$ with respect to $$\delta_k(G,\ell) = \frac{\delta_1\delta_2}{m\delta_2+n\delta_1}.$$
\end{lemma}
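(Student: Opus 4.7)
The plan is to exploit the compositum structure of a $G_1\times G_2$-extension: any such $L/k$ is $L=F_1F_2$ with $F_i/k$ a $G_i$-extension and $F_1\cap F_2=k$. Write $D=\Disc(L/k)$ and $D_i=\Disc(F_i/k)$, so $[F_1:k]=n$, $[F_2:k]=m$, $[L:k]=mn$. The first ingredient is the compositum discriminant inequality $D\le D_1^{m}D_2^{n}$: this follows from the tower identity $\disc(L/k)=\Nm_{F_1/k}(\disc(L/F_1))\cdot\disc(F_1/k)^{m}$ together with the standard divisibility $\disc(L/F_1)\mid\disc(F_2/k)\,O_{F_1}$, valid for linearly disjoint compositums (since $O_{F_1}O_{F_2}\subset O_L$ and base change gives $\disc(O_{F_1}O_{F_2}/O_{F_1})=\disc(F_2/k)\,O_{F_1}$). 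In particular $D_1^{m}\mid D$, so the exponents $\eta_i:=\log D_i/\log D$ satisfy $\eta_i\in[0,1]$ and $m\eta_1+n\eta_2\ge 1$.

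The second ingredient is two complementary upper bounds on $|\Cl_{L/k}[\ell]|$. Combining Lemma~\ref{lem:trivial-bound-relative} with the hypothesis $G_1\in\mathcal{G}_k(\ell)$ with saving $\delta_1$,
\begin{align*}
|\Cl_{L/k}[\ell]|\le|\Cl_{F_1/k}[\ell]|\cdot|\Cl_{L/F_1}[\ell]|
&=O_{\epsilon,k}\!\left(D_1^{1/2-\delta_1+\epsilon}\cdot\frac{D^{1/2}}{D_1^{1/2}}\right)\\
&=O_{\epsilon,k}\!\left(D^{1/2-\delta_1\eta_1+\epsilon}\right),
\end{align*}
and swapping the roles of $F_1,F_2$ yields the analogous bound $|\Cl_{L/k}[\ell]|=O_{\epsilon,k}(D^{1/2-\delta_2\eta_2+\epsilon})$. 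Taking the better of the two always gives a saving of $\max(\delta_1\eta_1,\delta_2\eta_2)$ in the exponent of $D$.

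It then remains to solve the elementary linear program $\min_{m\eta_1+n\eta_2\ge 1,\ \eta_i\ge 0}\max(\delta_1\eta_1,\delta_2\eta_2)$. The minimum is achieved when both quantities coincide and the constraint is tight, forcing $\eta_1=\delta_2/(m\delta_2+n\delta_1)$ and $\eta_2=\delta_1/(m\delta_2+n\delta_1)$, with common value $\delta_1\delta_2/(m\delta_2+n\delta_1)$---exactly the claimed saving. The argument is purely structural: no Chebotarev density input enters, in contrast to the Extension Lemma. The only real care required is verifying the compositum discriminant inequality cleanly over an arbitrary base $k$, and tracking the passage between absolute and relative discriminants when invoking Lemma~\ref{lem:trivial-bound-relative}; both introduce only a harmless multiplicative constant depending on $k$ and the degrees $m,n$.
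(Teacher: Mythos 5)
Your proof is correct and follows essentially the same route as the paper: decompose $L$ as a linearly disjoint compositum $F_1F_2$, apply Lemma~\ref{lem:trivial-bound-relative} and the hypothesis to get the two complementary bounds $O_{\epsilon,k}(D^{1/2-\delta_i\eta_i+\epsilon})$, use the discriminant inequality $D\le D_1^mD_2^n$ to force $m\eta_1+n\eta_2\ge 1$, and optimize. The only cosmetic difference is that you phrase the last step as a small linear program while the paper chooses a cut-off $M_0$ and case-splits, but these are the same computation; your more explicit derivation of the discriminant inequality via the tower formula is a fine substitute for the paper's appeal to the conductor-discriminant formula.
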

\begin{proof}
	Every $G$-extension $L/k$ is the compositum $L_1L_2/k$ of $L_i/k$ where $\Gal(L_i/k) = G_i$ for $i=1,2$ and $\tilde{L}_1/k \cap \tilde{L}_2/k = k$. 
	
	We separate the discussion by
	$$\eta_i(L/k):= \frac{\ln \Disc(L_i/k)}{ \ln \Disc(L/k)}, \quad i=1,2.$$
	It follows from the definition that $\Disc(L_i/k)= \Disc(L/k)^{\eta_i}$.
	
	For any $G$-extension $L/k$, we have
	\begin{equation}\label{eqn:direct-product-eta1}
	\begin{aligned}
	|\Cl_{L/k}[\ell]| \le & |\Cl_{L_1/k}[\ell]| \cdot |\Cl_{L/L_1}[\ell]| = O_{\epsilon,k}(\Disc(L_1/k)^{1/2-\delta_1+\epsilon}) \cdot \frac{\Disc(L)^{1/2+\epsilon}}{\Disc(L_1)^{1/2+\epsilon}}. \\
	= & O_{\epsilon,k}\Big(\frac{\Disc(L/k)^{1/2+\epsilon}}{ \Disc(L_1/k)^{\delta_1}}\Big) = O_{\epsilon,k}( \Disc(L/k)^{1/2-\delta_1\cdot \eta_1+\epsilon}).
	\end{aligned}
	\end{equation}
	Here the first inequality comes from Lemma \ref{lem:trivial-bound-relative}. The first equality comes from the assumption $G_1\in \mathcal{G}(\ell)$. The second equality comes from the conductor-discriminant formula. Similarly, 
	\begin{equation}\label{eqn:direct-product-eta2}
	|\Cl_{L/k}[\ell]|= O_{\epsilon,k}( \Disc(L/k)^{1/2-\delta_2\cdot \eta_2+\epsilon}).
	\end{equation}
	It follows from conductor-discriminant formula that $\Disc(L_1/k)^m \Disc(L_2/k)^n \ge \Disc(L/k)$ when $[L_1L_2:k] = [L_1:k][L_2:k]$. So we get
		\begin{equation}\label{eqn:direct-product-inequality}
		\Disc(L/k)^{m\eta_1} \Disc(L/k)^{n\eta_2} \ge \Disc(L/k),
		\end{equation}
	which gives an inequality between $\eta_i$ that
	\begin{equation}\label{eqn:eta1-eta2}
	m\eta_1+ n\eta_2\ge 1.
	\end{equation}
	
	If $\eta_1\ge M$, then by (\ref{eqn:direct-product-eta1}) we have
	$$|\Cl_{L/k}[\ell]| =  O_{\epsilon,k}( \Disc(L/k)^{1/2-\delta_1\cdot M+\epsilon}).$$
	If $\eta_1\le M$, then $\eta_2\ge \frac{1-M\eta_1}{n}$ from (\ref{eqn:eta1-eta2}). Therefore by (\ref{eqn:direct-product-eta2}) we have
	$$|\Cl_{L/k}[\ell]| = O_{\epsilon,k}( \Disc(L/k)^{1/2-\delta_2 \frac{1-Mm}{n}+\epsilon}).$$
	
	To get the optimal bound for $|\Cl_{L/k}[\ell]|$, we choose $M$ such that
	$$\delta_1\cdot M= \delta_2 \cdot \frac{1-Mm}{n}.$$
	We solve that
	$$M_0 = \frac{\delta_2}{n\delta_1+m\delta_2},$$
	with the corresponding optimal saving $\delta_k(G,\ell)$ is
	\begin{equation}
	\begin{aligned}
	\delta_k(G,\ell) = \delta_1 M_0 = \frac{\delta_1\delta_2}{m\delta_2+n\delta_1}.
	\end{aligned}
	\end{equation}	
\end{proof}
\begin{remark}
	Notice that both (\ref{eqn:direct-product-eta1}), (\ref{eqn:direct-product-eta2}) and (\ref{eqn:direct-product-inequality}) still hold when $L_1/k$ and $L_2/k$ are linearly disjoint, i.e., $[L_1L_2:k] = [L_1:k][L_2:k]$. So the exact same argument of Lemma \ref{lem:compositum} applies with no change to all permutation groups arising from linearly disjoint compositum of a $G_1$ extension with a $G_2$ extension. Equivalently, these are the permutation groups $G\subset G_1\times G_2 \subset S_{mn}$ that is transitive and $S_1:=\{ g_1\in G_1 \mid \exists g_2\in G_2, (g_1, g_2)\in G  \} = G_1$ and similarly $S_2 = G_2$. 
\end{remark}
 
\section{Forcing Sequence for $p$-Groups}\label{sec:group-theory}
In \cite{JW20}, the author has proved Theorem \ref{thm:main} for $G = (\zp)^r$ with $r>1$. Notice that the Frattini quotient of a $r$-generated $p$-group is always isomorphic to $(\zp)^r$. This leads to our strategy to prove Theorem \ref{thm:main} for $r$-generated $p$-groups, i.e., to do an induction via Extension Lemma \ref{lem:ind-hyp-2} with the base case $G=(\zp)^r$. 

The key group theoretic lemma we will prove is the following. It is a crucial input for applying Extension Lemma \ref{lem:ind-hyp-2}.
\begin{theorem}\label{thm:p-grp-forcing}
	Every non-cyclic and non-quaternion $p$-group $G$ has a decreasing sequence of normal subgroups $N_i$ 
	$$ G \supset \Phi(G)= N_0 \supset N_1\supset N_2\supset \cdots \supset N_m = {e},$$
	where for every $0\le i<m$: \\
	1) $[N_i: N_{i+1}] = p$;\\
	2) $(G/N_{i+1}, \pi)$ is a forcing extension of $G/N_{i}$ where $\pi: G/N_{i+1} \to G/N_{i}$.
\end{theorem}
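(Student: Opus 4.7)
The plan is to first derive a clean group-theoretic reformulation of the forcing condition, and then to build the normal chain by induction on $|G|$, by iteratively quotienting by carefully chosen central order-$p$ subgroups inside the Frattini subgroup.

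\textbf{Step 1 (reformulating forcing).} Since $[N_i : N_{i+1}] = p$, the kernel $N_i/N_{i+1}$ of $\pi \colon G/N_{i+1} \twoheadrightarrow G/N_i$ is a normal subgroup of order $p$ in the $p$-group $G/N_{i+1}$, hence central; so $\pi$ is a central $\mathbb{Z}/p$-extension. For a nontrivial $c \in G/N_i$ of order $r = p^a$, any two lifts $\tilde c, \tilde c \zeta$ with $\zeta \in N_i/N_{i+1}$ satisfy $(\tilde c\zeta)^r = \tilde c^r \zeta^r = \tilde c^r$ (since $\zeta^p = e$ and $p \mid r$); hence all lifts of $c$ share a common order, which equals $r$ precisely when $\langle \tilde c\rangle \cap N_i/N_{i+1} = \{e\}$. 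The forcing condition for $(G/N_{i+1}, \pi)$ is therefore equivalent to $N_i/N_{i+1}$ not being the unique subgroup of order $p$ in $G/N_{i+1}$, and by the classical classification of $p$-groups with a unique order-$p$ subgroup, this becomes: $G/N_{i+1}$ is non-cyclic and non-quaternion.

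\textbf{Step 2 (inductive construction).} It thus suffices to build a descending chain $\Phi(G) = N_0 \supset \cdots \supset N_m = \{e\}$ of $G$-normal subgroups of successive index $p$ with $G/N_i$ non-cyclic and non-quaternion for all $i \geq 1$. I induct on $|G|$: when $\Phi(G) = \{e\}$ the chain is trivial, and otherwise I seek $z \in Z(G) \cap \Phi(G)$ of order $p$ such that $G/\langle z\rangle$ is again non-cyclic and non-quaternion. Applying the inductive hypothesis to $G/\langle z\rangle$, whose Frattini subgroup is $\Phi(G)/\langle z\rangle$ by functoriality of $\Phi$, produces a chain whose preimages in $G$, together with $\{e\}$ at the bottom, form the desired chain. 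The problem reduces to the following Key Lemma: \emph{every non-cyclic non-quaternion $p$-group $H$ with $\Phi(H) \neq \{e\}$ admits $z \in Z(H) \cap \Phi(H)$ of order $p$ with $H/\langle z\rangle$ non-cyclic and non-quaternion.}

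\textbf{Step 3 (Key Lemma; main obstacle).} Nontriviality of $\Phi(H)$ ensures $Z(H) \cap \Phi(H) \neq \{e\}$, so such $z$ exists, and $H/\langle z\rangle$ has the same Frattini quotient $(\mathbb{Z}/p)^r$, $r \geq 2$, as $H$, hence is always non-cyclic. For $p$ odd, generalized quaternion groups do not exist, so any such $z$ works. \textbf{The main obstacle is the case $p = 2$}, where I must rule out $H/\langle z\rangle \cong Q_{2^{n-1}}$ for every valid central involution $z \in \Phi(H)$ (with $|H| = 2^n$). Supposing this for contradiction, $H$ is $2$-generated with $n \geq 4$, and the preimage of the unique involution of each $Q_{2^{n-1}}$-quotient has size $4$, so $|\Omega_1(H)| \leq 4$; combined with the Klein four subgroup guaranteed by non-cyclic non-quaternion, this forces $\Omega_1(H) \cong (\mathbb{Z}/2)^2$. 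Lifting the standard generators $a, b$ of $Q_{2^{n-1}} = H/\langle z\rangle$ to $\tilde a, \tilde b \in H$, the element $u := \tilde a^{2^{n-3}}$ is an involution (otherwise $\tilde a$ has order $2^{n-1}$, which forces $\Omega_1(H) = \{e, z\}$, contradicting non-cyclic non-quaternion). Using $\tilde b \tilde a \tilde b^{-1} \in \{\tilde a^{-1}, \tilde a^{-1}z\}$ together with $z^{2^{n-3}} = e$ for $n \geq 4$, one computes $\tilde b u \tilde b^{-1} = u$, so $u \in Z(H) \cap \Phi(H)$ and, being distinct from $z$, serves as a second valid central involution. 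Case analysis on $\tilde b^2 \in \{u, uz\}$ then yields a contradiction: if $\tilde b^2 = u$, the defining relations of $Q_{2^{n-1}}$ are satisfied in $\langle \tilde a, \tilde b\rangle$, giving $|\langle \tilde a, \tilde b\rangle| \leq 2^{n-1} < |H|$ and contradicting that $\tilde a, \tilde b$ generate $H$ via the Frattini argument; if $\tilde b^2 = uz$, then $H/\langle u\rangle$ presents as $\mathbb{Z}/2^{n-3} \rtimes \mathbb{Z}/4$ with inversion action, whose center has order at least $4$ and which is therefore not generalized quaternion, contradicting the assumption applied to $u$. Carrying out this finite case analysis rigorously is the technical heart of the proof.
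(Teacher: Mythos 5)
Your Steps 1 and 2 are correct and align with the paper's core strategy. The reformulation of ``forcing'' as ``the bigger quotient $G/N_{i+1}$ is non-cyclic and non-quaternion'' (via the central-$\mathbb{Z}/p$ lifting computation and Hall's one-involution classification) is exactly what the paper uses, and your recursive reduction --- find a central order-$p$ element $z\subset\Phi(H)$ with $H/\langle z\rangle$ non-cyclic non-quaternion, then induct on $|H|$ --- is a legitimate alternative to the paper's global construction via a careful refinement of the lower exponent-$p$ central series and the ``quaternion groups are leaves of the descendant tree'' fact. Both routes bottleneck at the same place: for $p=2$ one must avoid quaternion quotients in the chain, and your Key Lemma states this precisely.

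The problem is in Step 3, and there are two genuine gaps. First, the claim that $\tilde a$ having order $2^{n-1}$ ``forces $\Omega_1(H)=\{e,z\}$'' is false as stated: dihedral and semidihedral $2$-groups have cyclic index-$2$ subgroups and many involutions; what is true (and what you want) is that a $2$-group with a cyclic subgroup of index $2$ is cyclic, dihedral, semidihedral, generalized quaternion, or modular, and a direct check shows that none of the non-cyclic non-quaternion ones has a central involution with quaternion quotient, so $\tilde a$ must have order $2^{n-2}$. Second, and more seriously, your case analysis on $\tilde b^2\in\{u,uz\}$ silently fixes the other independent degree of freedom $\tilde b\tilde a\tilde b^{-1}\in\{\tilde a^{-1},\tilde a^{-1}z\}$, which cannot be normalized away by changing lifts (replacing $\tilde a$ by $\tilde az$ or $\tilde b$ by $\tilde bz$ leaves the sign of the twist unchanged). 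When $\tilde b\tilde a\tilde b^{-1}=\tilde a^{-1}z$, the subcase $\tilde b^2=u$ does \emph{not} give the $Q_{2^{n-1}}$ relations, and the subcase $\tilde b^2=uz$ does \emph{not} present $H/\langle u\rangle$ as $\mathbb{Z}/2^{n-3}\rtimes\mathbb{Z}/4$ with inversion action (indeed $\bar{\tilde b}$ fails to normalize $\langle\bar{\tilde a}\rangle$ in that twist, since $\bar z\notin\langle\bar{\tilde a}\rangle$). So as written the case analysis does not close.

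The repair is exactly the paper's key observation, and it is cleaner than the case analysis you attempted: once you have produced \emph{two} distinct central involutions $z,u\in Z(H)\cap\Phi(H)$ with $H/\langle z\rangle\cong H/\langle u\rangle\cong Q_{2^{n-1}}$, note that $\langle z\rangle\cap\langle u\rangle=\{e\}$, so $H$ embeds in $H/\langle z\rangle\times H/\langle u\rangle$ with image the fiber product over $H/\langle z,u\rangle\cong Q_{2^{n-1}}/Z\cong D_{2^{n-2}}$. Thus $H\cong Q_{2^{n-1}}\times_{Q_{2^{n-1}}/Z} Q_{2^{n-1}}\cong Q_{2^{n-1}}\times C_2$, which has generator rank $3$; but $H$ was assumed to have rank $2$. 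This replaces the entire $(\tilde b^2,\,\tilde b\tilde a\tilde b^{-1})$ case analysis with a single structural contradiction. With this substitution your induction-on-$|G|$ proof goes through and gives a genuinely different organization from the paper's LECS-refinement argument, at the cost of a somewhat heavier reliance on the classification of $2$-groups with a cyclic maximal subgroup in ruling out the order-$2^{n-1}$ lift.
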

See the proof in section \ref{ssec:p-grp-odd} and section \ref{sssec:proof-2-grp}. In general we will say a sequence $G= N_0\supset N_1\supset \cdots \supset N_m=e$ of normal subgroups of $G$ is a \emph{forcing sequence of $G$} if $(G/N_{i+1}, \pi)$ is a forcing extensions of $G/N_i$ where $\pi: G/N_{i+1}\to G/N_i$ for every $0\le i<m$.

\subsection{Basics for $p$-Group}
We first introduce some basic concepts for $p$-groups. 
Given a finite $p$-group $G$, the \textit{Frattini subgroup} $\Phi(G)\subset G$ is defined to the intersection of all maximal subgroups of $G$. 
We call $G/\Phi(G)$ the \textit{Frattini quotient} of $G$. It follows from Burnside's basis theorem that $G/\Phi(G)$ is the largest elementary abelian quotient of $G$. Therefore $\Phi(G) = G^p\cdot [G,G]$ is the subgroup generated by the set of all the $p$-th power $G^p$ and the commutator $[G, G]$. It is clearly normal since it is a characteristic subgroup. Now suppose $G/\Phi(G) \simeq (\zp)^r$, then we say the \textit{generator rank} of $G$ is $r$. Equivalently, the generator rank is $r = \dim(H^1(G, \zp))$ where $\zp$ is considered as a trivial $G$-module. 

We define $G_0 = G$ to be the group itself and $G_1 = \Phi(G)$ to be the Frattini subgroup. Inductively we define $G_j:= G_{j-1}^p \cdot [G_{j-1}, G]$ for $j>0$, equivalently $G_j$ is defined to be the minimal subgroup such that $G_{j-1}/G_j$ is central in $G/G_j$ with exponent $p$. These subgroups form a strictly decreasing sequence of subgroups
$$G = G_0  \supset G_1\supset G_2\supset \cdots \supset G_c = \{ e\}.$$
This sequence is called \emph{lower exponent $p$ central series} of $G$. We define the minimal integer $c$ such that $G_c = \{ e\}$ to be \textit{$p$-class} of the finite $p$-group $G$. We will denote the $p$-class of $G$ by $c(G)$. We will write $\bar{G}_k := G/G_k$ in short.

We can parametrize all finite $p$-groups with generator rank $r$ by the \textit{$p$-group generating algorithm} \cite{OBrien} by putting all $p$-groups into a \emph{descendant tree}. The root of the tree is the elementary abelian $p$-group $A =(\zp)^r$. The \textit{immediate descendants} of a finite $p$-group $G$ are all finite $p$-groups $D$ such that $D/D_{c-1} \simeq G$ where $c = c(D)$. Since these characteristic groups $G_j$ are defined inductively by an explicit formula, one can show that the operation of taking $j$-th subgroup in the sequence commutes with group homomorphism, i.e., if $f: M\to N$ are two $p$-groups, then $f(M_j) = N_j$ for all $j>0$. Therefore if $D/D_k\simeq G$ for some $1<k<c(D)$, then $c(G)=k$, and $D/D_j \simeq G/G_j$ for all $0<j<c(G)$. This guarantees that ancestors of a $p$-group $G$ are the quotients $G/G_j$ with $0< j< c(G)$, the descendants of a finite $p$-group $G$ are all finite $p$-groups $D$ such that $D/D_k\simeq G$ where $0<k<c$ and $c$ is the $p$-class of $D$. Since $G/G_1$ is always isomorphic to one elementary abelian group, $G$ belongs to the unique tree with the root $(\zp)^r$ where $r = r(G)$. In particular, a $p$-group $G$ is one descendant in $c(G)$-th generation if we count elementary abelian group as the $1$-st generation. This tree encodes many properties of $p$-groups. If a $p$-group $G$ does not have any descendants, equivalently there are no $p$-group $D$ with $D/D_j\simeq G$ where $D_j$ defined in the sequence, then we call such a group a \textit{leaf}. 

\subsection{Odd $p$-Group}\label{ssec:p-grp-odd}
In this section, we will prove Theorem \ref{thm:p-grp-forcing} for odd $p$-groups. Notice that for odd $p$, there is no quaternion group, so we will prove Theorem \ref{thm:p-grp-forcing} for every non-cyclic odd $p$-groups. 

\begin{lemma}\label{lem:p-grp-series}
	Given a $p$-group $G$, there exists a series of normal subgroups 
	$$G\supset \Phi(G) = N_0\supset N_1\supset \cdots\supset N_m= e,$$
	where for every $0\le i< m$:\\
	1) $[N_{i}:N_{i+1}]= p$;\\
	2) $N_{i}/N_{i+1}$ is in the center of $G/N_{i+1}$;\\
	3) the sequence is a refinement of the lower exponent $p$ central series of $G$, or equivalently, for all $j<c(G)$ the subgroup $G_j$ is equal to $N_i$ for some $i$.
\end{lemma}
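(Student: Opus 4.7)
The plan is to refine the lower exponent $p$ central series $G = G_0 \supset G_1 \supset \cdots \supset G_c = \{e\}$ one step at a time by inserting intermediate subgroups inside each elementary abelian quotient $G_j/G_{j+1}$. The starting point is the observation recorded earlier in the text: by definition $G_{j+1} = G_j^p \cdot [G_j, G]$, so $G_j/G_{j+1}$ is elementary abelian of some rank $r_j$, and it sits inside the center of $G/G_{j+1}$.

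First I would fix, for each $0 \le j < c$, an arbitrary chain of subgroups
\[
G_{j+1} = H_{j,0} \subset H_{j,1} \subset \cdots \subset H_{j,r_j} = G_j
\]
with $[H_{j,k}:H_{j,k-1}] = p$. This is possible because $G_j/G_{j+1} \cong (\Z/p\Z)^{r_j}$ is a vector space over $\F_p$, where such a complete flag always exists. Concatenating these chains starting from $G_1 = \Phi(G)$ (so that $N_0 = \Phi(G)$, and then working downward through $G_2, G_3, \ldots$) produces a decreasing sequence $\Phi(G) = N_0 \supset N_1 \supset \cdots \supset N_m = \{e\}$ whose consecutive indices are all equal to $p$, and by construction every $G_j$ with $j \ge 1$ appears among the $N_i$, giving condition $(1)$ and $(3)$.

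For condition $(2)$, I would check two things about each intermediate $H_{j,k}$: namely that $H_{j,k}$ is normal in $G$, and that $H_{j,k}/H_{j,k-1}$ is central in $G/H_{j,k-1}$. Both reduce to the single containment $[G, G_j] \subset G_{j+1}$ coming from the definition of the lower exponent $p$ central series. Indeed, for any $h \in H_{j,k}$ and $g \in G$ we have $[g,h] \in [G, G_j] \subset G_{j+1} \subset H_{j,k-1} \subset H_{j,k}$, which shows simultaneously that $g h g^{-1} \in H_{j,k}$ (normality) and that the image of $h$ in $G/H_{j,k-1}$ is centralized by $g$ (centrality of the quotient).

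There is no serious obstacle here: the proof is essentially a bookkeeping argument built around a single commutator containment. The only mild subtlety to be careful about is that the refinement has to be done inside a single quotient $G_j/G_{j+1}$ at a time (so that the commutator bound $[G, G_j] \subset G_{j+1}$ is strong enough for every inserted step), rather than trying to mix subgroups across different levels of the central series; once the refinement is set up level by level as above, both conditions $(1)$ and $(2)$ follow immediately, and condition $(3)$ holds by construction.
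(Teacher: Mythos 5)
Your proof is correct and follows the same approach as the paper: refine the lower exponent $p$ central series level by level using that each $G_j/G_{j+1}$ is elementary abelian and central in $G/G_{j+1}$. The only cosmetic difference is that you verify normality and centrality of the inserted subgroups by a direct commutator computation from $[G,G_j] \subset G_{j+1}$, whereas the paper passes to the quotient $G/G_{j+1}$ and uses that subgroups of the center are normal; the two arguments are equivalent.
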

\begin{proof}
	For every $p$-group $G$ with $c(G)=c$, let's say $G=G_0\supset G_1\supset \cdots\supset G_c=\{e \}$ is the lower exponent $p$ central series of $G$. By construction, for every $j<c$, the subgroup $G_j$ is normal in $G$ since it is characteristic, $G_{j}/G_{j+1}$ is in the center of $G/G_{j+1}$, and $G_j/G_{j+1}$ has exponent $p$. 
	
	Fix $j$. Let $G_j = S_0\supset S_1\supset \cdots \supset S_K=G_{j+1}$ be an arbitrary refinement of $G_j\supset G_{j+1}$ with $[S_k:S_{k+1}] = p$. Since $G_j/G_{j+1}$ is in the center of $G/G_{j+1}$, for all $k$, we have $S_k/S_{k+1}$ is in the center of $G/S_{k+1}$. We will show that $S_k$ is also normal in $G$ for all $k$. In fact, since $S_k/G_{j+1} \subset G/G_{j+1}$ is in the center of $G/G_{j+1}$, clearly $S_k/G_{j+1}$ is normal in $G/G_{j+1}$. Then denote $\phi: G\to G/G_{j+1}$ to be the canonical projection, the preimage $S_k = \phi^{-1}(S_k/G_{i+1})$ is also normal in $G$. 

	Therefore we could refine the lower exponent $p$ central series by inserting normal subgroups $S_{k,j}$ as above between $G_j$ and $G_{j+1}$ for every $j>0$. We will get a descending sequence of subgroups $G\supset \Phi(G) = N_0\supset N_1\supset \cdots\supset N_m= 1$ that satisfies all three conditions.  
\end{proof}

\begin{proof}[Proof of Theorem \ref{thm:p-grp-forcing} for odd $p$]
Let $G$ be a non-cyclic odd $p$-group. By Lemma \ref{lem:p-grp-series}, we have a sequence
$$G\supset \Phi(G) = N_0\supset N_1\supset \cdots\supset N_m= e,$$
of descending subgroups, where $N_i/N_{i+1} \simeq \zp$ in the center of $G/N_{i+1}$. It suffices to prove that the following extension $(G/N_{i+1}, \pi)$ is forcing for each $i$.
	\begin{center}
		\begin{tikzcd}
			0\arrow{r} & N_i/N_{i+1} \arrow[r] & G/N_{i+1} \arrow[r,"\pi_i"] & G/N_{i} \arrow{r} & 0.
		\end{tikzcd}
	\end{center}

Since $N_i/N_{i+1}$ is in the center of $G/N_{i+1}$, the extension $\pi$ is a central extension. Suppose $g_0\in G/N_i$ is not identity. Denote the conjugacy class containing $g_0$ by $\mathcal{C}\subset G/N_i$. We will show that for any $c\in \mathcal{C}$, all elements in $\pi^{-1}(c)$ have the same order. If $\pi(\tilde{c}) = c$, then all preimages of $c$ is $a\tilde{c}$ for $a\in N_i/N_{i+1}$. Since $\ord(a)=p$ and $a$ is central we get $\ord(\tilde{c}) = \ord(a\tilde{c})$. On the other hand, if $c' = x^{-1}cx\in G/N_i$, then $\tilde{x}^{-1} \tilde{c} \tilde{x}\in \pi^{-1}(c')$ when $\tilde{x}\in \pi^{-1}(x)$. It follows that $\ord(\tilde{x}^{-1} \tilde{c} \tilde{x} ) = \ord(\tilde{c})$.

Therefore it suffices to find an element $y\in G/N_{i+1}$ such that $\ord(y) = \ord(\pi(y))$. If $G$ is not cyclic, then $G/N_{i+1}$ is not cyclic since $G/\Phi(G) = G/N_{i+1}/ \Phi(G/N_{i+1}) = (\zp)^r$ with $r>1$, then when $p$ is odd there exists at least two cyclic subgroups of order $p$, see e.g. Theorem $12.5.2$ in \cite{Hall}. Therefore there must be a subgroup $T$ of order $p$ and $T\neq N_i/N_{i+1}$. Denote the generator of $T$ by $y$. By construction, $\ord(y)=p = \pi(y)$ since $y\notin N_i/N_{i+1}$. Then $\pi$ is forcing with respect to the conjugacy class $\mathcal{C}\subset G/N_i$ of $\pi(y)$.
\end{proof}

\subsection{Even $p$-Group}
In this section, we will prove Theorem \ref{thm:p-grp-forcing} for $2$-groups. Such a good picture for odd $p$-groups where all non-cyclic $p$-groups satisfy Theorem \ref{thm:p-grp-forcing} no longer holds for $2$-group. We will first introduce these exceptional groups, \emph{generalized quaternion groups}, and list their properties in section \ref{sssec:quaternion}. Then we will give the proof for all non-cyclic and non-quaternion $2$-groups in section \ref{sssec:proof-2-grp}.

\subsubsection{Generalized Quaternion Groups}\label{sssec:quaternion}
We define the \textit{generalized quaternion group} by 
$$Q(n):=\large\langle x, y\mid x^{2^{n+1}} = y^4 = 1, x^{2^{n}}= y^2, y^{-1}xy= x^{-1} \large \rangle.$$
When $n=1$, we get the smallest such group, which is usually called quaternion group and denoted by $Q_8$. The generalized quaternion groups have the special property that all abelian subgroups are cyclic, see Figure \ref{diag:quaternion} for the subgroup lattice of $Q(1)$ as an example.

We will prove that this family of $2$-groups is the only exceptional groups aside from cyclic $2$-groups for Theorem \ref{thm:p-grp-forcing} in section \ref{sssec:proof-2-grp}. In preparation for the proof, we will first list several useful properties of $Q(n)$ in Lemma \ref{lem:gen-quarternion-property}.

Before we state the properties, we briefly recall the concept of \emph{Schur multiplier}. Given a finite group $G$, we say $E$ is a \textit{stem extension} of $G$ 
\begin{center}
	\begin{tikzcd}
		0\arrow{r} & Z\arrow[r] & E \arrow{r} & G \arrow{r} & 0,
	\end{tikzcd}
\end{center}
if $Z\subset [E,E] \cap Z(E)$ where $Z(E)$ is the center of $E$. We then define \textit{Schur multiplier} $M(G)$ of $G$ to be the kernel of the unique largest stem extension of $G$. Equivalently, if $G  = F/R$ where $F$ is a free group, then there is a formula $M(G) = R\cap [F, F]/ [R, F]$ for Schur multiplier. 

\begin{lemma}\label{lem:gen-quarternion-property}
	The generalized quaternion group $Q(n)$ has the following property:
	\begin{enumerate}
		\item The order of $Q(n)$ is $2^{n+2}$. 
		\item The $2$-class of $Q(n)$ is $n+1$.
		\item The center of $Q(n)$ is $\mathbb{Z}/2\mathbb{Z}$.
		\item It has trivial Schur multiplier.
		\item It is a leaf on the descendant tree.
	\end{enumerate}
\end{lemma}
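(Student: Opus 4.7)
The plan is to handle the five items in order, since each later item can make mild use of earlier ones. The first three are essentially direct computations from the presentation, while items (4) and (5) will require more structural input, and (5) will be the main obstacle.

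For (1), the relation $y^{-1}xy = x^{-1}$ means every word in $x,y$ can be put in the normal form $x^i y^j$; the relations $x^{2^{n+1}}=1$ and $y^2 = x^{2^n}$ then force $0 \le i < 2^{n+1}$ and $j \in \{0,1\}$, giving the bound $|Q(n)| \le 2^{n+2}$. To show equality I would exhibit a faithful $2$-dimensional complex representation sending $x \mapsto \mathrm{diag}(\zeta, \zeta^{-1})$ for $\zeta$ a primitive $2^{n+1}$-th root of unity and $y \mapsto \bigl(\begin{smallmatrix} 0 & 1 \\ -1 & 0 \end{smallmatrix}\bigr)$; one checks all defining relations hold and the image has order $2^{n+2}$.

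For (2), compute the lower exponent-$2$ central series directly. From $[x,y] = x^{-1}(y^{-1}xy) = x^{-2}$, one gets $[Q(n),Q(n)] = \langle x^2\rangle$; combined with $Q(n)^2 \subset \langle x^2\rangle$ (using $y^2 = x^{2^n}$), this gives $G_1 = \Phi(Q(n)) = \langle x^2\rangle$. An easy induction using $[x^{2^j}, y] = x^{-2^{j+1}}$ yields $G_j = \langle x^{2^j}\rangle$ for $1 \le j \le n+1$, so $G_n \neq e$ and $G_{n+1}=e$, giving $c(Q(n)) = n+1$. For (3), the element $x^{2^n}$ lies in the center because it has order $2$ and $y^{-1} x^{2^n} y = x^{-2^n} = x^{2^n}$. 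Conversely, writing any central element as $x^i y^j$, commutation with $y$ forces $x^{2i}=1$, and if $j=1$ commutation with $x$ forces $x^{-2}=1$, which is impossible for $n \ge 1$; thus $Z(Q(n)) = \langle x^{2^n}\rangle \cong \Z/2\Z$.

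For (4), I would apply the Hopf formula $M(G) \cong (R \cap [F,F])/[F,R]$ with $F$ the free group on $x,y$ and $R$ the normal closure of the three relators $r_1 = x^{2^{n+1}}$, $r_2 = x^{-2^n}y^2$, $r_3 = y^{-1}xyx$. A more efficient route is the standard fact that a finite group with a balanced presentation (equal number of generators and relators) whose abelianization is finite has trivial Schur multiplier provided the deficiency is zero and the group is finite; $Q(n)$ has the balanced $2$-generator $3$-relator presentation above but can also be presented with $2$ generators and $2$ relators (e.g.\ $\langle x,y \mid x^{2^n} = y^2,\ y^{-1}xy = x^{-1}\rangle$), which has deficiency zero, and by Epstein's theorem a finite group of zero deficiency has trivial Schur multiplier. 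I would cite a standard reference such as Karpilovsky's \emph{The Schur Multiplier} for the computation in detail.

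For (5), the main obstacle, I would use the characterization due to Beyl--Felgner--Schmid that a finite group $G$ is capable (non-leaf in the sense that some $D$ satisfies $D/Z(D) \cong G$) if and only if the epicenter $Z^*(G)$ is trivial, together with the descendant-tree reformulation that $G$ is a leaf iff it has no immediate descendants, i.e.\ no $p$-group $D$ of class $c(G)+1$ with $D/D_{c(G)} \cong G$. The key input is that for $Q(n)$, the Schur multiplier vanishes by (4), so the unique largest stem extension is $Q(n)$ itself, which forces $Z^*(Q(n)) = Z(Q(n)) \neq e$ by a standard exact-sequence argument on covering groups. Concretely I would argue: any hypothetical immediate descendant $D$ would produce a central extension $0 \to D_{c(D)-1} \to D \to Q(n) \to 0$ whose kernel lies in the center, but triviality of $M(Q(n))$ together with the cohomological classification of such extensions implies $D_{c(D)-1}$ must already be contained in the image of $Z(D)$ in a way that collapses the class, a contradiction. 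This last step is the most delicate and will likely lean on a textbook treatment of the $p$-group generation algorithm (e.g.\ O'Brien's original paper) where the non-capability of generalized quaternion groups is recorded explicitly.
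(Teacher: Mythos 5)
Items (1)--(4) are sound.  In (1) you actually supply more detail than the paper (which just observes $|\langle x\rangle|=2^{n+1}$ and index $2$); the faithful $2$-dimensional representation is a clean way to nail the lower bound.  (2) and (3) match the paper's computation.  For (4) the paper simply cites Isaacs, Exercise~5A.7, while you propose the deficiency-zero argument (the $2$-generator, $2$-relator presentation $\langle x,y\mid x^{2^n}=y^2,\ y^{-1}xy=x^{-1}\rangle$ plus the theorem that a finite group of deficiency zero has trivial Schur multiplier); this is a legitimate alternative route.

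In (5) there is a genuine gap.  You set up a hypothetical immediate descendant $D$, obtain a central extension
\[
0\longrightarrow D_{c-1}\longrightarrow D\longrightarrow Q(n)\longrightarrow 0,
\]
and then assert that ``triviality of $M(Q(n))$ together with the cohomological classification of such extensions'' forces a contradiction.  But a trivial Schur multiplier does \emph{not} kill nontrivial central extensions: by the universal coefficient sequence, $H^2(G,A)\cong\mathrm{Ext}^1(G^{ab},A)$ when $M(G)=0$, and this is generally nonzero (for instance $0\to\Z/2\to\Z/4\to\Z/2\to 0$ is a nonsplit central extension of a group with trivial multiplier).  What the paper does at exactly this point, and what your sketch is missing, is to show the extension is a \emph{stem} extension.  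It invokes Theorem~4.4 of Nover's thesis: if $(G/G_{c-1})^{ab}\cong G^{ab}$, then every immediate descendant $D$ has $D^{ab}\cong G^{ab}$.  For $Q(n)$ one checks, using $[Q(n),Q(n)]=\langle x^2\rangle$ and $G_{c-1}=\langle x^{2^n}\rangle$, that indeed $(G/G_{c-1})^{ab}=G^{ab}=C_2\times C_2$.  Then $D^{ab}=Q(n)^{ab}$ forces $D_{c-1}\subset[D,D]$, so the extension is stem, and only then does $M(Q(n))=0$ yield $D_{c-1}=e$, a contradiction.  Separately, your appeal to the Beyl--Felgner--Schmid epicenter criterion is answering a different question: capability (is $G\cong D/Z(D)$ for some $D$?) is not the same as being terminal in the descendant tree (does $G$ admit a $D$ with $D/D_{c(D)-1}\cong G$?).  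For instance $C_p$ is non-capable, yet has many descendants.  So that side remark would need to be replaced, not merely tightened, and the stem-extension step is the essential missing idea.
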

\begin{proof}
	\begin{enumerate}
		\item 
		Consider the cyclic subgroup $N=\langle x \rangle$ generated by $x$. Then $Q(n)/N= C_2$ since $y^2\in N$. Therefore $|Q(n)| = 2^{n+2}$.
		
		\item
 	    We can write down the exponent $p$ lower central series for $Q(n)$.	Recall that $G_1=\Phi(G) = G^2[G,G]$. By definition, it is clear that $x^2\in G_1$, and $G_1/\langle x^2 \rangle = C_2\times C_2 = \langle \bar{x}, \bar{y} \rangle$. So $G_1 = \large \langle x^2 \large\rangle$ is a cyclic group with order $2^{n}$. For $k=2$, notice that the only subgroup $G_2$ of $G_1$ with $G_1/G_2$ exponent $2$ is
		$G_2 =  \large \langle x^{4} \large\rangle.$ Similarly $G_k =  \large \langle x^{2^k} \large\rangle$. Therefore we have the $2$-class of $Q(n)$ is $n+1$. 
		
		\item
		Suppose $x^s$ is in the center, then $y\cdot x^s = x^s \cdot y = y\cdot x^{-s}$ implies that $s = 2^n$. One can show that it is the only element that commute both with $x$ and $y$. Therefore $Z(Q(n)) = \{ e, x^{2^n} \}$.

		\item
		See Exercise $5A.7$ in \cite{Isaacs}.
		
		\item
		Given $G = Q(n)$, we have shown that $G_k = \langle x^{2^k} \rangle$, and $(G/G_k)^{ab} = G^{ab} = C_2\times C_2$ for every $k>0$. The abelianization $G^{ab}\simeq (G/G_{c-1})^{ab}$ where $c = c(G)$, then all immediate descendants $D$ of $G$ must have $D^{ab} = G^{ab}$ by Theorem $4.4$ \cite{Nover}. Therefore if $G$ has any immediate descendant $D$, then $D$ is a central extension of $G$ with $D^{ab} = G^{ab}$. By definition, a central extension is a stem extension if and only if $E^{ab} = G^{ab}$. So the existence of immediate descendants contradicts with $G$ having trivial Schur multiplier. 
	\end{enumerate}
\end{proof}

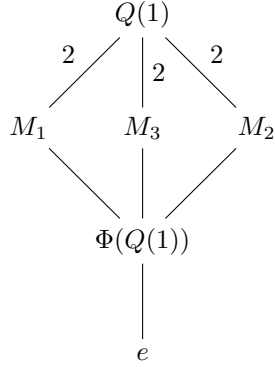
\begin{figure}
	\centering
	\begin{tikzpicture}[node distance = 1.5cm, auto, bend angle=45]
	\node (Gi)  {$Q(1)$};
	\node (Gi1a) [below of=Gi, left of=Gi] {$M_1$};
	\node (Gi1b) [below of=Gi, right of=Gi] {$M_2$};
	\node (Gi1c) [below of=Gi] {$M_3$};
	\node (Gi+) [below of=Gi1a, right of=Gi1a] {$\Phi(Q(1))$};
	\node (Gi++) [below of=Gi+] {$e$};
	
	\draw[-] (Gi) to node [swap] {$2$}  (Gi1a);
	\draw[-] (Gi) to node  {$2$}  (Gi1b);
	\draw[-] (Gi) to node  {$2$}  (Gi1c);
	\draw[ -] (Gi1a) to node  {}  (Gi+);
	\draw[-] (Gi1b) to node  {}  (Gi+);
	\draw[ -] (Gi1c) to node  {}  (Gi+);
	\draw[-] (Gi+) to node  {}  (Gi++);
	\end{tikzpicture}
	\caption{Quaternion Group of Order $8$} \label{diag:quaternion}
\end{figure}

We can see that Theorem \ref{thm:p-grp-forcing} cannot hold for generalized quaternion group. As an example, the smallest quaternion group $Q_8$ is a central extension of $\mathbb{Z}/2\mathbb{Z}\times \mathbb{Z}/2\mathbb{Z}$,
\begin{center}
	\begin{tikzcd}
		0\arrow{r} & \mathbb{Z}/2\mathbb{Z} \arrow[r] & Q_8 \arrow[r,"\pi"] & \mathbb{Z}/2\mathbb{Z}\times \mathbb{Z}/2\mathbb{Z} \arrow{r} & 0.
	\end{tikzcd}
\end{center}
Such an extension $(Q_8, \pi)$ is not forcing since for every element $c\in \mathbb{Z}/2\mathbb{Z}\times \mathbb{Z}/2\mathbb{Z}$, elements in $\pi^{-1}(c)$ all have order $4$ whereas $c$ has order $2$. One can similarly show that for general $n$, the extension $(Q(n), \pi)$ where $\pi: Q(n) \to Q(n)/Z(Q(n))$ is not forcing. This failure has a lot to do with the fact that $Z(Q(n))$ is the only $\mathbb{Z}/2\mathbb{Z}$ subgroup of $Q(n)$. This turns out to be a characterizing property of $Q(n)$ by the following lemma.

\begin{lemma}[Theorem $12.5.2$, \cite{Hall}]\label{lem:Hall}
	A $p$-group which contains only one subgroup of order $p$ is cyclic or generalized quaternion group. 
\end{lemma}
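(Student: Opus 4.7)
The plan is to induct on $|G|$. Write $Z_0$ for the unique subgroup of order $p$. Since $Z_0$ is characteristic, and any nontrivial normal subgroup of a nontrivial $p$-group meets the center, one has $Z_0 \subset Z(G)$. The uniqueness assumption is inherited by every subgroup $H \subset G$: by Cauchy, any nontrivial $H$ contains some order-$p$ subgroup, which must coincide with $Z_0$, so $H$ itself has exactly one order-$p$ subgroup. A first consequence is that every abelian subgroup of $G$ is cyclic, since a non-cyclic finite abelian $p$-group contains $\zp \times \zp$ with its $p+1 \geq 3$ distinct order-$p$ subgroups. By the inductive hypothesis, every proper subgroup of $G$ is cyclic or generalized quaternion.

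For odd $p$, no generalized quaternion group exists, so every proper subgroup of $G$ is cyclic. I would then invoke the short classical lemma that a $p$-group all of whose proper subgroups are cyclic is either itself cyclic or isomorphic to $\zp \times \zp$; a direct argument takes two distinct cyclic maximal subgroups and shows their intersection lies in the (unique) central order-$p$ subgroup, forcing $|G| = p^2$. Since $\zp \times \zp$ violates the uniqueness assumption, $G$ must be cyclic in the odd-$p$ case.

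For $p = 2$, I would set $\bar G := G/Z_0$ and split on whether $\bar G$ is cyclic. If $\bar G$ is cyclic, then $G$ contains a cyclic subgroup of index at most $2$, and invoking the classical list of $2$-groups possessing a cyclic maximal subgroup (cyclic, dihedral, semi-dihedral, generalized quaternion, or the modular maximal-cyclic group) together with the single-involution hypothesis leaves exactly the cyclic and generalized quaternion options, since the other three families each contain at least two involutions. If $\bar G$ is non-cyclic, then by induction $\bar G \cong Q(n)$, and I would lift its standard presentation $\langle x, y \mid x^{2^{n+1}}=1,\ y^2 = x^{2^n},\ y^{-1}xy = x^{-1}\rangle$ to generators $\tilde x, \tilde y \in G$; the constraint that $g^2 = 1$ forces $g \in Z_0$ rigidly controls the squares of $\tilde x$ and $\tilde y$ and identifies $G \cong Q(n+1)$. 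The hardest step will be this last identification, that is, ruling out all central $\zp$-extensions of $Q(n)$ other than $Q(n+1)$ that are compatible with a unique involution; this is essentially a cohomological/presentation computation, and it is precisely the rigidity imposed by having only one involution that produces the cyclic/generalized quaternion dichotomy.
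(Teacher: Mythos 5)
The paper states this as Hall's Theorem $12.5.2$ and gives no proof of its own, so there is nothing internal to compare against; I am evaluating your sketch on its own terms.

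Your reduction in the $p=2$, ``$\bar G$ non-cyclic'' branch has a genuine gap that breaks the whole induction. The hypothesis of having a unique subgroup of order $p$ is inherited by subgroups (as you correctly note) but emphatically \emph{not} by quotients, and in particular not by $\bar G = G/Z_0$. An involution of $\bar G$ is a coset $gZ_0$ with $g^2 \in Z_0$, i.e.\ it comes either from $Z_0$ itself or from an element of order $4$ in $G$, and there are in general many of the latter. Concretely, for the very groups you are trying to recover: $Q(n+1)/Z(Q(n+1))$ is the dihedral group of order $2^{n+2}$, which has $2^{n+1}+1$ involutions and is certainly not generalized quaternion. So the assertion ``by induction $\bar G \cong Q(n)$'' is unjustified and in fact false, and the subsequent step of lifting a $Q(n)$-presentation through a central $C_2$ never gets started. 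The standard repair is to run the induction through a \emph{maximal subgroup} $M$ rather than the quotient: $M$ does inherit the hypothesis, hence by induction $M$ is cyclic or generalized quaternion, hence $G$ has a cyclic subgroup of index at most $4$; one then reduces to the case of a cyclic subgroup of index $2$ and applies the classification of $2$-groups with a cyclic maximal subgroup and discards the families (dihedral, semidihedral, $C_{2^{n-1}}\times C_2$, modular) with more than one involution. That is essentially your first $p=2$ sub-case, run unconditionally, and it is the part of your sketch worth keeping.

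A secondary point for the odd-$p$ branch: the ``direct argument'' you sketch is not correct as stated. For a non-cyclic $p$-group, two distinct maximal subgroups intersect exactly in $\Phi(G)$, whose index is exactly $p^2$; thus $M_1 \cap M_2$ has order $|G|/p^2$ and lies inside a subgroup of order $p$ only when $|G|\le p^3$, so you cannot conclude $|G|=p^2$ this way. If instead you cite the classical classification of minimal non-cyclic $p$-groups (only $\zp\times\zp$ and $Q_8$), that is legitimate, but the proof of that classification for odd $p$ is not shorter than Hall's theorem itself: it runs through a cyclic maximal subgroup $\langle a\rangle$, writes $bab^{-1}=a^{1+p^{n-2}}$, and computes $(ba^j)^p = b^p a^{j(1+r+\cdots+r^{p-1})}$ to manufacture a second subgroup of order $p$. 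So you should either present that computation or cite the result outright, rather than claiming a two-line direct argument.
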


\subsubsection{Proof of Theorem \ref{thm:p-grp-forcing} for $p=2$}\label{sssec:proof-2-grp}
In last section, we have shown that Theorem \ref{thm:p-grp-forcing} does not hold for quaternion group and cyclic group. Therefore the best we can hope for is that Theorem \ref{thm:p-grp-forcing} is true for all $2$-groups that are non-quaternion and non-cyclic. We will show that is really the case!

Unlike the case for odd $p$ where an arbitrary refinement of the lower exponent $p$ central series satisfies the property stated in Theorem \ref{thm:p-grp-forcing}, when $p=2$, it can happen that some refinement of the lower exponent $p$ central series will not be forcing when the refinement $G/N_i \simeq Q(n)$ for some $i$. Therefore our main focus in the following proof is to show that we can always find a detour in the refinement to avoid such quaternion quotients. 

\begin{proof}[Proof of Theorem \ref{thm:p-grp-forcing} for $p=2$]
	We will separate the discussion for $2$-group $G$ with generator rank $r=2$ and $r>2$.
	
    Firstly, we consider the case when $G$ is a $2$-group with $r>2$. Then $G$ is non-cyclic and non-quaternion since cyclic $2$-group has $r=1$ and quaternion group has $r=2$. By Lemma \ref{lem:p-grp-series}, we have a sequence $$G\supset \Phi(G) = N_0\supset N_1\supset \cdots\supset N_m= e,$$
    of descending groups, where $N_i/N_{i+1} \simeq \zp$ in the center of $G/N_{i+1}$. We will to show that the following extension is forcing for every $i$,
    	\begin{center}
    		\begin{tikzcd}
    			0\arrow{r} & N_i/N_{i+1} \arrow[r] & G/N_{i+1} \arrow[r,"\pi_i"] & G/N_{i} \arrow{r} & 0.
    		\end{tikzcd}
    	\end{center}
    	Since $G/\Phi(G) = (\zp)^r$ and $Q(n)$ has $r=2$, the quotient $G/N_{i+1}$ is not quaternion for every $i$. Then by Lemma \ref{lem:Hall}, there must exist a subgroup $T = \langle y \rangle$ of order $2$ and $T\neq N_i/N_{i+1}$. Then the same proof for odd $p$-group shows that $\pi$ is forcing with respect to the conjugacy class $\mathcal{C}\subset G/N_i$ of $\pi(y)$.
    	
    	Secondly, we consider the case when $G$ has $r=2$ and is non-quaternion. It suffices to construct a sequence 
    	$$G\supset \Phi(G) = N_0\supset N_1\supset \cdots\supset N_m= e,$$
    	where for every $i$, $[N_i:N_{i+1}]=2$, $N_i/N_{i+1}$ is in the center of $G/N_{i+1}$, and finally $G/N_i$ is non-quaternion. Indeed if we find such a sequence then the proof for $r>2$ carries over. 
    	
    	We firstly take the lower exponent $p$ central series $G=G_0\supset \cdots \supset G_j \supset \cdots \supset G_c = 1$ where $c$ is the $2$-class of $G$. By the construction of $G_j$, the quotient $G/G_j$ has $2$-class $c(G/G_j)=j$. By Lemma \ref{lem:gen-quarternion-property}, the group $Q(n)$ has no descendants, so $G/G_j$ is non-quaternion for every $j<c$. For $j=c$, $G/G_c = G$ is non-quaternion by assumption. Then we start to refine the exponent $p$ lower central series of $G$. We denote the dimension of $[G_j:G_{j+1}]$ to be $r_j$, i.e., $[G_j:G_{j+1}] = 2^{r_j}$. If $r_j>1$ for certain $j$, then we have multiple options to choose intermediate subgroups $G_j = G_{j,0}\supset G_{j,1} \supset \cdots G_{j,i} \cdots \supset G_{j, r_j}= G_{j+1}$ for $0\le i\le r_j$ with $[G_{j,i}:G_{j,i+1}]=2$. By the proof of Lemma \ref{lem:p-grp-series}, an arbitrary choice of refinement $G= N_0\supset N_1\supset \cdots \supset N_m=e$ we choose will satisfy that $[N_i:N_{i+1}]=p$ and $N_i/N_{i+1} \subset G/N_{i+1}$.
    	
    	 	\begin{figure}
    	 		\centering
    	 		\begin{tikzpicture}[node distance = 1.5cm, auto, bend angle=45]
    	 		\node (G0) {$G=G_0$};
    	 		\node (G1) [below of=G0] {$G_1$};
    	 		\node (G2) [below of=G1] {$G_2$};
    	 		\node (Gi)  [below of=G2] {$G_j$};
    	 		\node (Gi1a) [below of=Gi, left of=Gi] {$G_{j,1}$};
    	 		\node (Gi1b) [below of=Gi, right of=Gi] {$G'_{j,1}$};
    	 		\node (Gi1c) [below of=Gi] {$G''_{j,1}$};
    	 		\node (Gi+) [below of=Gi1a, right of=Gi1a] {$G_{j,2}$};
    	 		\node (Gi++) [below of=Gi+] {$G_{j+1}$};
    	 		
    	 		\draw[-] (G0) to node  {$p^2$}  (G1);
    	 		\draw[-] (G1) to node  {$p$}  (G2);
    	 		\draw[dashed, -] (G2) to node  {}  (Gi);
    	 		\draw[-] (Gi) to node [swap] {$p$}  (Gi1a);
    	 		\draw[-] (Gi) to node  {$p$}  (Gi1b);
    	 		\draw[-] (Gi) to node  {$p$}  (Gi1c);
    	 		\draw[ -] (Gi1a) to node  {}  (Gi+);
    	 		\draw[-] (Gi1b) to node  {}  (Gi+);
    	 		\draw[ -] (Gi1c) to node  {}  (Gi+);
    	 		\draw[dashed, -] (Gi+) to node  {}  (Gi++);
    	 		
    	 		\draw[-, out=200, in=120] (G0) to node [swap] {$Q(j)$}  (Gi1a);
    	 		\end{tikzpicture}
    	 		\caption{Subgroup lattice of $G$} \label{diag:series-quaternion}
    	 	\end{figure}
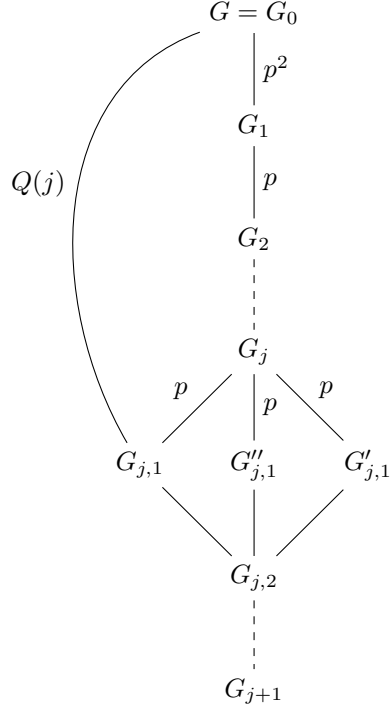
    	
    	We will prove that we can refine the lower exponent $p$ central series in a careful way so that none of the quotient is quaternion. Fix $j$. Firstly, by the standard property of $p$-class and lower exponent $p$ central series, we have that $c(G/G_{j,i})=j+1$ for all $0< i\le r_j$. If $G/G_{j,i} \simeq Q(s)$ for some $s$, then $Q(s)$ has $2$-class $j+1$. By Lemma \ref{lem:gen-quarternion-property}, we must have $s = j$. Again by Lemma \ref{lem:gen-quarternion-property}, we get $|Q(j)| = [G:G_{j,i}] = 2^{j+2}$. However since $G$ has generator rank $2$ and $[G:G_1] = 2^2$, we must have $[G_m: G_{m+1}]=2$ for every $1\le m< j$ and $i=1$. Since $r_j>1$, there are at least $3$ options in choosing $G_{j,1}$, see Figure \ref{diag:series-quaternion}. Suppose $G/G_{j,1} \simeq Q(j)$, then since $G_j/G_{j,1}$ is central in $G/G_{j,1}$ and by Lemma \ref{lem:gen-quarternion-property} the center of $Q(j)$ is cyclic of order $2$, then we can see that $G_j = Q(j)/Z(Q(j))$. Suppose for two of the choices, we get both quotients $G/G_{j,1} \simeq G/G'_{j,1}\simeq Q(j)$ isomorphic to $Q(j)$. Then by the universal property of fibered product, we get
    	$$ G/G_{j,2} = Q(j)\times_{G_j} Q(j) \simeq Q(j)\times C_2.$$
    	However $Q(j)\times C_2$ has generator rank $3$, and $G/G_{j,2}$ is a quotient of $G/G_{j+1}$, therefore must have generator rank at most $2$. Contradiction. So we prove that at most one of the $3$ choices of $G_{j,1}$ satisfies $G/G_{j,1} = Q(j)$, and therefore we can always find a normal subgroup $G_{j,1}$ such that $G/G_{j,1}$ is non-quaternion.   	   
\end{proof}

\section{$\ell$-torsion in Class Group of Nilpotent Extensions}\label{sec:main-proof}
\subsection{Proof of the Main Theorem}
In this section, we will prove the following theorems building on results in section \ref{sec:induction} and \ref{sec:group-theory}.
\begin{theorem}\label{thm:p-grp-main}
	Given an arbitrary integer $\ell>1$ and any number field $k$, the regular representation of a $p$-group $G$ is in $\mathcal{G}(\ell)$ if $G$ is non-cyclic and non-quaternion.
\end{theorem}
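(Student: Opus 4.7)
The plan is to combine the three main ingredients already developed in the paper: the base case from Theorem \ref{thm:base-case} on elementary abelian groups, the forcing sequence produced by Theorem \ref{thm:p-grp-forcing}, and the inductive Extension Lemma \ref{lem:ind-hyp-2}.

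First, let $G$ be a non-cyclic and non-quaternion $p$-group. Since $G$ is non-cyclic, its Frattini quotient $G/\Phi(G) \simeq (\zp)^r$ satisfies $r>1$. By Theorem \ref{thm:base-case}, this elementary abelian group lies in $\mathcal{G}_k(\ell)$ with an explicit positive saving $\delta_0 := \delta_k((\zp)^r, \ell) > 0$. This will be the base of the induction.

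Next, I would invoke Theorem \ref{thm:p-grp-forcing} to obtain a decreasing sequence of normal subgroups
\[
G \supset \Phi(G) = N_0 \supset N_1 \supset \cdots \supset N_m = \{e\},
\]
with $[N_i : N_{i+1}] = p$ and each $(G/N_{i+1},\pi_i) \twoheadrightarrow G/N_i$ a forcing extension with kernel of order $p$. Applying the Extension Lemma \ref{lem:ind-hyp-2} along this sequence, starting from $G/N_0 \simeq (\zp)^r$, one concludes by induction that each intermediate quotient $G/N_{i+1}$ lies in $\mathcal{G}_k(\ell)$ with saving
\[
\delta_{i+1} = \delta_i \cdot \eta_0^{(i)}, \qquad \eta_0^{(i)} = \frac{\Delta(\ell, p)}{p\cdot\Delta(\ell, p) + r_i\cdot\max\{\beta_i,\gamma_i\}},
\]
where $r_i$ is the order of the element appearing in the conjugacy class witnessing forcingness at stage $i$, and $\beta_i,\gamma_i$ are the constants from Lemma \ref{lem:MZ} applied to the Galois group $G/N_i$. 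Each $\eta_0^{(i)}$ is a strictly positive finite quantity, so after the $m$-th step we obtain
\[
\delta_k(G,\ell) = \delta_0 \cdot \prod_{i=0}^{m-1}\eta_0^{(i)} > 0,
\]
establishing $G \in \mathcal{G}_k(\ell)$.

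The argument is essentially a clean assembly, and the only point that really needs verification is that the hypothesis of Lemma \ref{lem:ind-hyp-2} holds at every rung, namely that $G/N_{i+1}\twoheadrightarrow G/N_i$ is forcing with respect to some conjugacy class in $G/N_i$; this is exactly condition (2) of Theorem \ref{thm:p-grp-forcing}. The \emph{hard part} was really done earlier --- producing the forcing sequence while avoiding generalized quaternion quotients in the $2$-group case, and proving the Extension Lemma itself. I would close the proof with a brief remark noting that the final saving $\delta_k(G,\ell)$, while potentially very small as $m$ grows, remains positive and effectively computable from $G$, $p$, $\ell$ and $k$, which is all that the theorem claims.
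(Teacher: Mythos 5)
Your proposal is correct and matches the paper's proof essentially line for line: both fix the forcing sequence from Theorem \ref{thm:p-grp-forcing}, start from the elementary abelian Frattini quotient via Theorem \ref{thm:base-case}, and induct up the sequence with the Extension Lemma \ref{lem:ind-hyp-2}. The only addition you make is writing out the product $\delta_k(G,\ell) = \delta_0 \prod_i \eta_0^{(i)}$ explicitly, which is a harmless unpacking of the same argument.
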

\begin{proof}
	By Theorem \ref{thm:p-grp-forcing}, for every non-cyclic and non-quaterion $p$-group $G$, we can find a decreasing sequence of normal subgroups $N_i$ such that the following is a forcing extension
		\begin{center}
			\begin{tikzcd}
				0\arrow{r} & N_i/N_{i+1} \arrow[r] & G/N_{i+1} \arrow[r,"\pi_i"] & G/N_{i} \arrow{r} & 0.
			\end{tikzcd}
		\end{center}
	for each $i$, and $G/N_0 = G/\Phi(G) = (\zp)^r$ is an elementary abelian group with generator rank $r$. 
	
	We will apply induction on $i$. For $i=0$, we have $G/N_0 \in \mathcal{G}(\ell)$ by \cite{JW20}. Suppose that $G/N_i\in \mathcal{G}(\ell)$, then by Extension Lemma \ref{lem:ind-hyp-2}, we have $G/N_{i+1} \in \mathcal{G}(\ell)$ since $(G/N_{i+1}, \pi_i)$ is a forcing extension. 
\end{proof}

\begin{theorem}\label{thm:nil-grp-main}
	Given any integer $\ell>1$ and any number field $k$, the regular representation of a nilpotent group $G$ is in $\mathcal{G}(\ell)$ if for every $p||G|$, the Sylow-$p$ subgroup $G_p\subset G$ is non-cyclic and non-quaternion.
\end{theorem}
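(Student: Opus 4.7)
The plan is to combine Theorem \ref{thm:p-grp-main} (which handles each Sylow subgroup individually) with the Compositum Lemma \ref{lem:compositum} via a short induction on the number of primes dividing $|G|$. Because $G$ is nilpotent, it admits the canonical decomposition $G \cong \prod_{p \mid |G|} G_p$ into its Sylow subgroups, and this is the structural fact that makes the compositum approach immediately applicable.

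First, I would translate the group-theoretic decomposition into a field-theoretic one: every $G$-extension $L/k$ is the compositum $L = L_{p_1} \cdots L_{p_s}$ of its Sylow subfields $L_{p_i} := L^{\prod_{j \neq i} G_{p_j}}$, with $\mathrm{Gal}(L_{p_i}/k) \cong G_{p_i}$. Since the $G_{p_i}$ have pairwise coprime orders, the extensions $L_{p_i}/k$ are pairwise linearly disjoint over $k$, and more generally any partial compositum $L_{p_1} \cdots L_{p_{j-1}}$ is linearly disjoint from $L_{p_j}$. This is exactly the hypothesis under which the Compositum Lemma (and the extension given in the remark following it) applies.

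Second, by the hypothesis that every Sylow subgroup $G_{p_i}$ is non-cyclic and non-quaternion, Theorem \ref{thm:p-grp-main} gives $G_{p_i} \in \mathcal{G}_k(\ell)$ with some explicit saving $\delta_i := \delta_k(G_{p_i}, \ell) > 0$. I would then induct on $s$, the number of distinct prime divisors of $|G|$. The base case $s = 1$ is Theorem \ref{thm:p-grp-main} itself. For the inductive step, set $H_{j-1} := \prod_{i < j} G_{p_i}$ and assume $H_{j-1} \in \mathcal{G}_k(\ell)$. The linear disjointness noted above lets me apply Lemma \ref{lem:compositum} (in the form of the remark) to the pair $H_{j-1}$ and $G_{p_j}$ to conclude $H_j = H_{j-1} \times G_{p_j} \in \mathcal{G}_k(\ell)$ with explicit saving
\[
\delta_k(H_j, \ell) = \frac{\delta_k(H_{j-1},\ell) \cdot \delta_j}{|G_{p_j}| \cdot \delta_j + |H_{j-1}| \cdot \delta_k(H_{j-1},\ell)}.
\]
After $s$ steps this yields $G = H_s \in \mathcal{G}_k(\ell)$.

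There is essentially no obstacle here beyond bookkeeping: the substantive analytic and group-theoretic work is already absorbed in Theorem \ref{thm:p-grp-main} (which assembles the Extension Lemma with the forcing sequence for $p$-groups) and in Lemma \ref{lem:compositum}. The only point to check carefully is the linear disjointness hypothesis for the compositum lemma, and this is automatic from the coprimality of $|G_{p_i}|$ and $|G_{p_j}|$ for $i \neq j$. The induction also produces an explicit, effective saving $\delta_k(G,\ell)$ in terms of the $\delta_k(G_{p_i},\ell)$, which is consistent with the effectivity remark following Theorem \ref{thm:main}.
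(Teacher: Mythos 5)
Your proposal is correct and follows essentially the same route as the paper: decompose the nilpotent group as the direct product of its Sylow subgroups, apply Theorem \ref{thm:p-grp-main} to each factor, and then iterate the Compositum Lemma \ref{lem:compositum}. The only extra detail you supply --- checking linear disjointness via coprime orders and writing out the recursion for $\delta_k(G,\ell)$ --- is left implicit in the paper but is exactly what its two-line proof is invoking.
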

\begin{proof}
	It is a standard fact that a nilpotent group $G$ is the direct product of its Sylow-$p$ subgroups $G_p$, i.e., $G = \prod_{p||G|} G_p$. If for every $p||G|$, the subgroup $G_p$ is non-cyclic and non-quaternion, then by Theorem \ref{thm:p-grp-main}, all $G_p\in \mathcal{G}(\ell)$. Using Lemma \ref{lem:compositum} inductively, we get $G\in \mathcal{G}(\ell)$.
\end{proof}

\begin{remark}
	We mention that Theorem \ref{thm:p-grp-main} and \ref{thm:nil-grp-main} will also result in corresponding improvements in upper bounds for Malle's conjecture, discriminant multiplicity conjecture and generalized version for these conjectures in \cite{EV06}, for implications of these conjectures see \cite{EV06,PTBW2}. 
\end{remark}

\subsection{Discussion on $D_4$}
In \cite{Ellen16}, all number fields with degree less or equal to $5$ are shown to have non-trivially bounded $\ell$-torsion in class groups on average, with $D_4$ being the only exceptional case. In \cite{ML17}, the method of using $L$-functions also does not seem to apply to $D_4$ quartic extensions since a positive density of $D_4$ extensions can contain a common subextension. In order to address this issue for $D_4$ extensions, it is suggested in \cite{ML17} and proved in \cite{Chen}, that when one considers the family of $D_4$-extensions containing a common $C_2\times C_2$ quotient $M$, denoted by $\mathcal{F}_M$, the obstacle from the common subfield is avoided. Precisely, the $\ell$-torsion in class groups $|\Cl_F[\ell]|$ is non-trivially bounded on average when $F$ is among the family of all $D_4$-quartic extensions over $\Q$ with a fixed $C_2\times C_2$ quotient (with a pointed $C_2$ quotient).

We remark that our result cannot prove that the permutation group $D_4\subset S_4$ is in $\mathcal{G}(\ell)$ yet, however, the regular representation of $D_4$, as shown by our proof, is in $\mathcal{G}_k(\ell)$ for every number field $k$ and every integer $\ell$. 

This gives an improvement on \cite{Chen}. Let's denote $F$ to be a $D_4$ quartic extension. When we impose the condition on $\tilde{F}$ having a fixed $C_2\times C_2$ quotient $M_0$ along with a pointed $C_2$ quotient $K_0$, there is a fixed quadratic subfield $K_0$ for all $F$. Notice that $|\Cl_F[\ell]| = |\Cl_{K_0}[\ell]| \cdot|\Cl_{F/K_0}[\ell]|$ when $\ell$ is odd and $|\Cl_{F/K_0}[\ell]|^2 = |\Cl_{\tilde{F}/M_0}[\ell]|$. Therefore a non-trivial bound on $\Cl_F[\ell]$ on average within $\mathcal{F}_{M_0}$ is equivalent to a non-trivial bound on $|\Cl_{\tilde{F}}[\ell]| = |\Cl_{M_0}[\ell]| \cdot|\Cl_{\tilde{F}/M_0}[\ell]| $ on average within the family of all $D_4$ octic extensions $\tilde{F}$ with a fixed $C_2\times C_2$ quotient $M_0$. Theorem \ref{thm:p-grp-main} proves that we can actually prove a point-wise non-trivial bound for $\Cl_{\tilde{F}}[\ell]$ for every $D_4$-octic extensions $\tilde{F}$. This means that we not only drop the "on average" condition, moreover, we drop the condition on containing a fixed $C_2\times C_2$ quotient.

\subsection{On $\delta_k(G, \ell)$}
In this section, we give a brief discussion on the amount of power saving $\delta_k(G, \ell)$. 

We remark that there are potentially several sources of optimizing the pointwise saving $\delta_k(G, \ell)$. For example in Theorem \ref{thm:p-grp-main}, notice that for a $p$-group $G_p$, when $p|\ell$, we can always write $\ell  = \ell_p \cdot \ell_{(p)}$ where $\ell_p$ is the maximal $p$-power divisor of $\ell$ and $\ell_{(p)}$ is the maximal divisor relatively prime to $p$. Writing $|\Cl_F[\ell]| = |\Cl_F[\ell_p]| \cdot |\Cl_F[\ell_{(p)}]|$, we can thus use the perfect bound for $\Cl_F[\ell_p]$ and use the method of Theorem \ref{thm:p-grp-main} for the part $\Cl_F[\ell_{(p)}]$. Another source of improving the saving is to construct different forcing sequences for a single $p$-group. 

Although we do not intend to give optimal savings for this work, we will give a quantification on how much saving one can derive away from the trivial bound from this work. Since the expression of $\delta_k(G, \ell)$ in general will be very complicated after applying the induction, we will only give an estimation (actually a lower bound on $\delta_k(G, \ell)$) in the main example: $k = \Q$, $G$ is a $p$-group with $p$ odd and $\ell\neq p$ is another odd prime. 

\begin{example}[$k=\Q$, $p\neq \ell$ both odd and prime]
Let $G$ be a $p$-group with order $p^n$ and generator rank $r$, and $\ell\neq p$ be an odd prime. By \cite{ZamThesis}, we can take $\gamma = 19$ and $\beta = 35$ universally for any $k$ and $G$ in Lemma \ref{lem:MZ}. For $G/\Phi(G)= (\zp)^r$ with $r>1$, by \cite{JW20}, we know that 
$$\delta_0 = \delta_{\Q}(G/\Phi(G), \ell) = \frac{\Delta(\ell,p)}{p(1+t_0)},$$
where $t_0 = 1/(p-1)\Delta(\ell,p)(1-2/p)$. For each step of induction, by Lemma \ref{lem:ind-hyp-2}, the saving $\delta_{\Q}(G, \ell)$ gets an extra factor $\eta_0$. Notice that by taking the forcing sequence for $G$ constructed by Theorem \ref{thm:p-grp-forcing}, we always have $r= p$ and $m = p$. Therefore we know that
$$\delta_{\Q}(G, \ell) = \delta_0 \cdot \eta_0^{n-r},$$
where $\eta_0 = \frac{1}{p} \frac{\Delta(\ell, p)}{\cdot \Delta(\ell, p)+ 35} \ge \frac{1}{72 p^2 \ell}$. So we have
$$\delta_{\Q}(G, \ell) \ge \frac{\Delta(\ell,p)}{p} \cdot \frac{1}{9\ell} \cdot \big(\frac{1}{72 p^2\ell} \big)^{n-r} \ge \frac{1}{18\cdot 72^{n-r}} \cdot \frac{1}{p^{2n+2-r}}\cdot \frac{1}{ \ell^{n+2-r}}.$$
\end{example}


\section{Acknowledgement}
The author is partially supported by Foerster-Bernstein Fellowship at Duke University. I would like to dedicate this paper to Prof. Nigel Boston, from whom I learnt a lot on both $p$-group theory and number theory in my graduate school. I would like to thank Dimitris Koukoulopoulos, Robert J. Lemke Oliver, Jesse Thorner, Melanie Matchett Wood and Asif Zaman for helpful conversations. I would like to thank Frank Thorne and Melanie Matchett Wood for suggestions on an earlier draft. 



\newcommand{\etalchar}[1]{$^{#1}$}

\Addresses
\end{document}